\date{}
\newlength{\defbaselineskip }
\long\def\salta#1{\relax}
 \theoremstyle{plain}
\newtheorem{theorem}{Theorem}[section]
\newtheorem{proposition}[theorem]{Proposition}
\newtheorem{lemma}[theorem]{Lemma}
\theoremstyle{definition}
\newtheorem{remark}[theorem]{Remark}
\newcommand{\re}{\mathbb{R}}
\newcommand{\bu}{\bar{u}}
\newcommand{\bv}{\bar{v}}
\newcommand{\bz}{\bar{z}}
\def\R{\mathbb{R}}
\def\t1p0{T^{1,p}_{0}(\Omega)}
\def\m2{M^{\frac{N(p-1)}{N-1}}(\Omega)}
\def\sobq{W^{1,q}_{0}(\Omega)}
\def\sob{W^{1,p}_{0}(\Omega)}
\def\into{\int_{\Omega}}
\def\w-1p'{W^{-1,p'}(\Omega)}
\def\pw-1p'u{L^{p'}(0,1;W^{-1,p'}(\Omega))}
\def\lp'n{(L^{p'}(\Omega))^{N}}
\numberwithin{equation}{section}
\title[
On the de Thélin eigenvalue problem and Landesman-Lazer conditions]{On the de Thélin eigenvalue problem and Landesman-Lazer conditions for quasilinear systems}
\author[D. Arcoya]{David Arcoya \orcidlink{https://orcid.org/0000-0002-7284-2413} }
\address{Departamento de Análisis Matemático, Campus Fuentenueva S/N, Universidad de Granada, 18071 Granada, Spain}
\email{darcoya@ugr.es}
\author[N. Borgia]{Natalino Borgia \orcidlink{https://orcid.org/0009-0004-4598-9542} }
\address{Dipartimento di  Matematica  \\ Universit\`{a} degli Studi di Bari Aldo Moro \\ Via Orabona 4\\ 70125 Bari, Italy}
\email{natalino.borgia@uniba.it}
\author[S. Cingolani]{Silvia Cingolani \orcidlink{https://orcid.org/0000-0002-3680-9106}}
\address{Dipartimento di  Matematica  \\ Universit\`{a} degli Studi di Bari Aldo Moro \\ Via Orabona 4\\ 70125 Bari, Italy}
\email{silvia.cingolani@uniba.it}
\begin{document}

\begin{abstract}
In this paper we prove that  the smallest eigenvalue $\lambda_1$ of the eigenvalue problem for  a quasilinear elliptic systems introduced by de Thélin in \cite{DT},  is not only simple (in a suitable sense), but also isolated.
 Moreover, we characterize  variationally a sequence $\{\lambda_k\}_k$ of eigenvalues, 
taking into account a suitable deformation lemma for $C^1$ submanifolds proved in \cite{BON}. Furthermore we prove the existence of a weak solution for a quasilinear elliptic systems in resonance around $\lambda_1$, under new sufficient Landesman-Lazer type conditions, 
extending the results by Arcoya and Orsina \cite{AO}.
\end{abstract}

\keywords{Quasilinear Elliptic Systems, Eigenvalue problem, Resonant problems}

\subjclass[2000]{35B34,35J62,35J92,47J30,58E05}

\maketitle

\section{INTRODUCTION}

\medskip
\noindent
In this paper, we study the following eigenvalue problem for quasilinear elliptic systems, originally introduced by de Thélin \cite{DT}:
\begin{equation}\label{EigenDeThélin0}
	\begin{cases}
		\begin{array}{ll}
			- \Delta_p u = \lambda |u|^{\alpha-1} |v|^{\beta+1} u
			&  \text{ in }\Omega,
			\medskip \\
			- \Delta_q v  = \lambda  |u|^{\alpha+1} |v|^{\beta - 1}v 
			&  \text{ in } \Omega, \medskip \\
			u=v=0  & \text{ on }  \partial\Omega,
		\end{array}
	\end{cases}
\end{equation}
where $\Omega$ is a bounded domain of $\mathbb{R}^N$ with smooth boundary, $N \geq 2$, $\lambda \in \R$, and $p, q,\alpha, \beta$ are real numbers satisfying:
\begin{equation}\label{condizionepqalphabeta}
	\displaystyle  1 < p < N , \quad 1 < q < N, \quad \alpha >0, \quad \beta >0, \quad \text{and}  \quad \frac{\alpha + 1}{p} + \frac{\beta + 1}{q} = 1.
\end{equation}
Here  $\Delta_r u := \text{ div } ( \left| \nabla u \right|^{r-2} \nabla u )$ with $r>1$, and the space setting is the product space $X:= W_0^{1,p}(\Omega)\times W_0^{1,q}(\Omega)$ endowed with the norm
\begin{equation}\label{normaX}
\|z\|= \|u\|_{1,p} + \|v\|_{1,q}, \quad z=(u,v)\in X,
\end{equation}	
\noindent
where  $\| \cdot \|_{1,r}$ denotes the usual gradient norm in $W^{1,r}_0(\Omega)$.\\ We will also denote with $\| \cdot \|_{r}$ the usual norm in $L^r(\Omega)$. A real number $\lambda$ is said to be an eigenvalue of \eqref{EigenDeThélin0} if there exists $(\varphi, \psi) \in X$, $(\varphi, \psi) \neq (0,0)$ such that
\begin{align*}
\displaystyle
& (\alpha+1) \int_{\Omega} |\nabla \varphi |^{p-2} \nabla \varphi \cdot \nabla u \, dx 
+ (\beta+1) \int_{\Omega} |\nabla \psi|^{q-2} \nabla \psi \cdot \nabla v \, dx \\
= &  (\alpha + 1) \into \left| \varphi \right|^{\alpha-1}  \left| \psi \right|^{\beta+1} \varphi u \, dx 
+ (\beta + 1) \into \left| \varphi \right|^{\alpha+1}  \left| \psi \right|^{\beta-1} \psi v  \, dx
\end{align*}
for any $(u, v) \in X$. The nontrivial weak solution $(\varphi, \psi)$  is then called eigenfunction associated to the eigenvalue $\lambda$.\\
Notice that if $(\varphi,\psi)$ is a nontrivial (i.e. $(\varphi, \psi) \neq (0,0)$) eigenfunction of \eqref{EigenDeThélin0} associated to an eigenvalue $\lambda \in \mathbb{R}$, then it cannot be semitrivial (i.e. $ \varphi \neq 0 \neq \psi$).\\
Let us introduce the functionals $\Phi : X \to \re$ and $\Psi:X \to \re$ defined as
\begin{align}\label{funzionalePhi}
\displaystyle
\Phi (u,v) =  \frac{\alpha+1}{p} \int_{\Omega} |\nabla u|^p \, dx + \frac{\beta+1}{q}\int_{\Omega} |\nabla v|^q \, dx
\end{align}
and
\begin{align}\label{funzionalePsi}
	\Psi(u,v) =  \into \left| u \right|^{\alpha+1}  \left| v \right|^{\beta+1} \, dx, 
\end{align}
for any $(u,v) \in X$, and let us denote with $\Sigma$  the  $C^1$ manifold
\begin{align*}
\displaystyle 
\Sigma:=\Biggl\{ z=(u,v) \in X \, : \,  \Psi(z)=1   \Biggr\}.
\end{align*}

In \cite[Theorem 1]{DT} de Thélin proved that problem \eqref{EigenDeThélin0} admits a smallest eigenvalue $\lambda_1>0$ defined as
\begin{equation}\label{definizioneprimoautovalore}
 \displaystyle \lambda_1 := \inf_{z \in \Sigma} \Phi(z).
\end{equation}

In \cite[Theorem 2]{DT}  he also proved, by means of Tolksdorf regularity results \cite{tolksdorf1983} and Vazquez's maximum principle \cite{VAZ},  that there exists a nontrivial eigenfunction  $(\varphi_0,\psi_0)$ associated to $\lambda_1$ such that $\varphi_0 \in C^{1,\eta}(\overline{\Omega})$, $\psi_0 \in C^{1,\xi}(\overline{\Omega})$ and $\varphi_0 > 0,$ $\psi_0 > 0$ in $\Omega$. In addition, by using results of Diaz and Saa \cite{DIAZSAA} and Anane \cite{AL}, in \cite[Theorem 3]{DT} it was proved that if $(\varphi_0,\psi_0)$ is an eigenfunction associated to $\lambda_1$  satisfying $\varphi_0 > 0,$ $\psi_0> 0$ in $\Omega$ and $(\varphi_0,\psi_0) \in \Sigma$, then is unique.

In this paper we prove that the eigenvalue $\lambda_1$ is isolated, and it is simple in the sense stated below.\\
Firstly, if $\lambda$ is an eigenvalue with a nontrivial eigenfunction $(\varphi, \psi)$, by using the relation $(\alpha+1)/p + (\beta+1)/q=1$ it turns out that 
$$ \displaystyle E:= \bigg\{ ( \left|  \theta \right|^{\frac{1}{p}} \varphi ,  \left|  \theta \right|^{\frac{1}{q}} \psi ) \, \text{sgn} (\theta) \, : \, \theta \in \mathbb{R} \bigg\} \,  \bigcup \, \bigg\{ (- \left|  \theta \right|^{\frac{1}{p}} \varphi ,  \left|  \theta \right|^{\frac{1}{q}} \psi ) \, \text{sgn} (\theta) \, : \, \theta \in \mathbb{R} \bigg\} $$
is a set of eigenfunctions associated to $\lambda$ and the associated eigenspace is not a linear subspace. However, if $(\bar{\varphi},\bar{\psi)}$ is another eigenfunction associated with the same eigenvalue $\lambda$, it could  be the case that  $(\bar{\varphi},\bar{\psi)} \not \in E$. \cite[Theorem~3]{DT} proves that this possibility is excluded for the eigenvalue $\lambda_1$ (simplicity of  $\lambda_1$).

\begin{theorem}\label{simplicityINTRODUZIONE}
The set of all eigenfunctions associated to $\lambda_1$ is given by
$$ \displaystyle E_1:= \bigg\{ ( \left|  \theta \right|^{\frac{1}{p}} \varphi_0 ,  \left|  \theta \right|^{\frac{1}{q}} \psi_0 ) \, \text{sgn} (\theta) \, : \, \theta \in \mathbb{R} \bigg\} \,  \bigcup \, \bigg\{ (- \left|  \theta \right|^{\frac{1}{p}} \varphi_0 ,  \left|  \theta \right|^{\frac{1}{q}} \psi_0 ) \, \text{sgn} (\theta) \, : \, \theta \in \mathbb{R} \bigg\} $$
where $(\varphi_0,\psi_0)$ is the unique nontrivial eigenfunction satisfying $(\varphi_0,\psi_0) \in \Sigma$,    $\varphi_0 >0$ and $ \psi_0>0$  in $\Omega$.
\end{theorem}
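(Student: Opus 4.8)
\noindent The plan is to identify the eigenfunctions of $\lambda_1$ that lie on $\Sigma$ with the minimizers of $\Phi|_\Sigma$, and then to upgrade de Th\'elin's uniqueness statement \cite[Theorem~3]{DT} for the \emph{positive normalized} minimizer to a description of all of them.

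I would first record two structural facts. For $t>0$ set $T_t(u,v):=(t^{1/p}u,\,t^{1/q}v)$, which maps $X$ into itself; a direct computation gives $\Phi(T_tz)=t\,\Phi(z)$ for every $z\in X$ and, thanks to $(\alpha+1)/p+(\beta+1)/q=1$, also $\Psi(T_tz)=t\,\Psi(z)$, so $\Phi$ and $\Psi$ are positively homogeneous of degree one along the smooth curves $t\mapsto T_tz$; in particular $E_1$ is invariant under every $T_t$. If $z_0=(u_0,v_0)\in\Sigma$ is a constrained critical point of $\Phi$, i.e.\ $\Phi'(z_0)=\mu\,\Psi'(z_0)$ in $X^{\ast}$ for some $\mu\in\R$ — equivalently, by the weak formulation of the eigenvalue problem, an eigenfunction associated to the eigenvalue $\mu$ — then differentiating $\Phi(T_tz_0)=t\Phi(z_0)$ and $\Psi(T_tz_0)=t\Psi(z_0)$ at $t=1$ and pairing $\Phi'(z_0)=\mu\Psi'(z_0)$ with the tangent vector $\frac{d}{dt}\big|_{t=1}T_tz_0=(\frac1p u_0,\frac1q v_0)$ yields $\Phi(z_0)=\mu\,\Psi(z_0)=\mu$. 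Since $\Sigma$ is a $C^1$ manifold and $\Phi\in C^1(X)$, every minimizer of $\Phi|_\Sigma$ is such a critical point, necessarily with $\mu=\Phi(z_0)=\inf_\Sigma\Phi=\lambda_1$, hence an eigenfunction of $\lambda_1$; conversely any eigenfunction of $\lambda_1$ lying on $\Sigma$ has $\Phi(z_0)=\lambda_1=\inf_\Sigma\Phi$, hence is a minimizer. Thus the eigenfunctions of $\lambda_1$ on $\Sigma$ are exactly the minimizers of $\Phi|_\Sigma$.

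Next I would establish the two inclusions defining $E_1$. Since $E_1$ is the orbit attached, in the discussion preceding the statement, to $(\varphi_0,\psi_0)$, every nontrivial element of $E_1$ is an eigenfunction of $\lambda_1$. For the converse, let $(\varphi,\psi)$ be any eigenfunction of $\lambda_1$; testing its weak formulation with $(u,v)=(\varphi,\psi)$ and using $\lambda_1>0$ gives $c:=\Psi(\varphi,\psi)>0$, so $T_{1/c}(\varphi,\psi)\in\Sigma$ and, lying in the orbit of $(\varphi,\psi)$, is again an eigenfunction of $\lambda_1$; since $E_1$ is $T_t$-invariant it suffices to prove $T_{1/c}(\varphi,\psi)\in E_1$, so after renaming we may assume $(\varphi,\psi)\in\Sigma$. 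Then $(\varphi,\psi)$ is a minimizer of $\Phi|_\Sigma$ by the previous step, and since $\bigl|\nabla|w|\bigr|=|\nabla w|$ a.e.\ so is $(|\varphi|,|\psi|)$, which is therefore an eigenfunction of $\lambda_1$ as well — nonnegative, and with neither component identically zero (an eigenfunction cannot be semitrivial). V\'azquez's strong maximum principle \cite{VAZ}, applied to each of the two equations whose right-hand side is $\ge 0$ precisely because $\lambda_1>0$, then gives $|\varphi|>0$ and $|\psi|>0$ in $\Omega$; hence $(|\varphi|,|\psi|)$ is a positive normalized eigenfunction of $\lambda_1$ and so coincides with $(\varphi_0,\psi_0)$ by \cite[Theorem~3]{DT}. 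Finally, by the Tolksdorf regularity \cite{tolksdorf1983} (used already in \cite[Theorem~2]{DT}), $\varphi$ and $\psi$ are continuous in $\Omega$, and $|\varphi|=\varphi_0>0$ on the connected set $\Omega$ forces $\varphi$ to have constant sign, so $\varphi=\pm\varphi_0$; likewise $\psi=\pm\psi_0$. Each of these four pairs visibly belongs to $E_1$ (take $\theta=\pm1$ in the appropriate one of its two defining sets), which closes the argument.

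The step I expect to be the main obstacle is the homogeneity identity $\Phi(z_0)=\mu$: testing the equation with $z_0$ only produces $(\alpha+1)\|\nabla u_0\|_p^p+(\beta+1)\|\nabla v_0\|_q^q=\mu\,[(\alpha+1)+(\beta+1)]$, which blends the two Dirichlet energies and does not identify $\Phi(z_0)$ with the eigenvalue, whereas that identification is exactly what makes de Th\'elin's uniqueness result (phrased for minimizers) usable here. A secondary delicate point is the passage from $|\varphi|=\varphi_0$ a.e.\ to $\varphi=\pm\varphi_0$, which needs both continuity of eigenfunctions and connectedness of $\Omega$; one must also check that the strong maximum principle genuinely applies to $(|\varphi|,|\psi|)$, which is where $\lambda_1>0$ enters.
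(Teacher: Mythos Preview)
Your argument is correct and follows essentially the same route as the paper's proof: show that $(|\varphi|,|\psi|)$ is again a minimizer of $\Phi|_\Sigma$ (hence an eigenfunction), apply regularity and V\'azquez to get strict positivity, use connectedness of $\Omega$ to force constant signs, and then invoke \cite[Theorem~3]{DT} after normalizing. The only difference is emphasis: what you flag as the ``main obstacle'' --- the identity $\Phi(z_0)=\mu$ on $\Sigma$, obtained by pairing with $(\tfrac1p u_0,\tfrac1q v_0)$ rather than $(u_0,v_0)$ --- is exactly the step the paper uses implicitly when it writes $\Phi(\varphi,\psi)/\Psi(\varphi,\psi)=\lambda_1$, so there is no real gap there.
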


Previous result and Picone's identity (see \cite[Theorem 1]{AH} and \cite[Lemma 24]{AG}), allow us to prove that if $(\varphi,\psi)$ is an eigenfunction of \eqref{EigenDeThélin0} associated to the eigenvalue $\lambda$ with $\lambda > \lambda_1$, then both $\varphi$ and $\psi$ change sign (see Proposition \ref{funzionicambianosegno}).\\
By means of this result, we are able to prove that:

\begin{theorem}\label{lambda1isolatoINTRODUZIONE}
The eigenvalue $\lambda_1$ is isolated, that is there exists $a > \lambda_1$  such that $\lambda_1$ is the only eigenvalue of \eqref{EigenDeThélin0} in  $[0,a]$. 
\end{theorem}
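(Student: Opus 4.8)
The plan is to argue by contradiction. Suppose $\lambda_1$ is not isolated; then there is a sequence of eigenvalues $\lambda_n \to \lambda_1$ with $\lambda_n > \lambda_1$ (they cannot be $<\lambda_1$ by the variational characterization \eqref{definizioneprimoautovalore}, and we may assume $\lambda_n\neq\lambda_1$), with associated eigenfunctions $(\varphi_n,\psi_n)$. After the usual normalization we may take $(\varphi_n,\psi_n)\in\Sigma$, i.e. $\Psi(\varphi_n,\psi_n)=1$. Since for an eigenfunction on $\Sigma$ one has $\Phi(\varphi_n,\psi_n)=\lambda_n$ (test the weak formulation against $(u,v)=(\varphi_n,\psi_n)$ and use $(\alpha+1)/p+(\beta+1)/q=1$), the sequence $(\varphi_n,\psi_n)$ is bounded in $X$. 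Hence, up to a subsequence, $\varphi_n\rightharpoonup\varphi_0$ in $W^{1,p}_0(\Omega)$, $\psi_n\rightharpoonup\psi_0$ in $W^{1,q}_0(\Omega)$, with strong convergence in $L^p$, $L^q$ and a.e.; by Sobolev embedding and the growth exponents (which are subcritical thanks to \eqref{condizionepqalphabeta}) one passes to the limit in the weak formulation, so $(\varphi_0,\psi_0)$ is a weak solution of \eqref{EigenDeThélin0} with eigenvalue $\lambda_1$.

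Next I would upgrade the weak convergence to strong convergence in $X$. Using $(\varphi_n,\psi_n)-(\varphi_0,\psi_0)$ as test function in the equations for $(\varphi_n,\psi_n)$ and in the limit equation, subtracting, and invoking the $S_+$-property (monotonicity) of the $p$-Laplacian and $q$-Laplacian operators together with the strong $L^p$, $L^q$ convergence of the right-hand sides, one obtains $\nabla\varphi_n\to\nabla\varphi_0$ in $L^p$ and $\nabla\psi_n\to\nabla\psi_0$ in $L^q$. In particular $\Psi(\varphi_0,\psi_0)=\lim\Psi(\varphi_n,\psi_n)=1$, so $(\varphi_0,\psi_0)\neq(0,0)$ is a nontrivial eigenfunction associated to $\lambda_1$ and lies on $\Sigma$. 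By Theorem \ref{simplicityINTRODUZIONE} (simplicity of $\lambda_1$), $(\varphi_0,\psi_0)\in E_1$; in particular, after replacing by the representative with the correct sign, $\varphi_0$ and $\psi_0$ have constant sign in $\Omega$ — up to the transformations in $E_1$ we may take $\varphi_0=|\theta|^{1/p}\varphi_0^*>0$ and $\psi_0=|\theta|^{1/q}\psi_0^*>0$ with $(\varphi_0^*,\psi_0^*)$ the positive eigenfunction, hence $\varphi_0>0$ and $\psi_0>0$ in $\Omega$ (or the analogous sign-definite case).

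Now comes the contradiction. On the one hand, each $\lambda_n>\lambda_1$, so by Proposition \ref{funzionicambianosegno} both $\varphi_n$ and $\psi_n$ change sign in $\Omega$; writing $\Omega_n^-:=\{x\in\Omega:\varphi_n(x)<0\}$, a standard argument (test the first equation of \eqref{EigenDeThélin0} with $\varphi_n^-$, use Sobolev and Hölder inequalities together with $\lambda_n\to\lambda_1$) gives a uniform lower bound $|\Omega_n^-|\geq c>0$ for the Lebesgue measure of the set where $\varphi_n$ is negative — and similarly for the negativity set of $\psi_n$. On the other hand, the strong convergence $\varphi_n\to\varphi_0$ in $L^p(\Omega)$ together with $\varphi_0>0$ a.e. in $\Omega$ forces $|\{x:\varphi_n(x)<0\}|\to 0$: indeed on a set of positive measure where $\varphi_0\ge\delta>0$, the functions $\varphi_n$ are eventually positive outside a small set by $L^p$-convergence (or: $\chi_{\{\varphi_n<0\}}\to 0$ a.e. along a further subsequence, and dominated convergence applies). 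This contradicts the uniform lower bound, and the contradiction proves the theorem. The main obstacle is the second step, namely establishing the strong convergence $(\varphi_n,\psi_n)\to(\varphi_0,\psi_0)$ in $X$ so that $(\varphi_0,\psi_0)$ remains on $\Sigma$ (it is this that rules out the collapse $(\varphi_0,\psi_0)=(0,0)$, which would otherwise break the argument); handling the coupled system requires care in choosing test functions and exploiting the $S_+$-property of each quasilinear operator simultaneously. The lower measure bound on the nodal sets is the other delicate point, relying on the fact that $\lambda_1$ is bounded away from $0$ by \eqref{definizioneprimoautovalore} together with the Sobolev embedding constants.
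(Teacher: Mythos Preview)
Your overall strategy coincides with the paper's: argue by contradiction, normalize the eigenfunctions, upgrade weak to strong convergence in $X$, identify the limit as a first eigenfunction via simplicity, and then derive a contradiction between a uniform lower bound on the measure of the nodal set and the sign-definiteness of the limit. The normalization on $\Sigma$ instead of on the unit sphere of $\|\cdot\|_{1,p}^p+\|\cdot\|_{1,q}^q$ is a harmless variant, and your final step (that $|\{\varphi_n<0\}|\to 0$ by a.e.\ convergence plus dominated convergence) is a legitimate alternative to the paper's Egorov argument.

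The genuine gap is in the lower bound on the nodal set. Testing only the first equation with $\varphi_n^-$ gives
\[
\|\nabla\varphi_n^-\|_p^p=\lambda_n\int_\Omega(\varphi_n^-)^{\alpha+1}|\psi_n|^{\beta+1}\,dx,
\]
and after H\"older and Sobolev (using that $\{\psi_n\}$ is bounded in $L^{q^*}$) the best you obtain is
\[
\|\nabla\varphi_n^-\|_p^{\,p-(\alpha+1)}\le C\lambda_n\,|\Omega_n^-|^{\gamma}
\]
for some $\gamma>0$. Since $\alpha+1<p$ by \eqref{condizionepqalphabeta}, the exponent on the left is positive; but $\varphi_n^-\to 0$ in $W^{1,p}_0$ (because $\varphi_n\to\varphi_0>0$), so this inequality yields no lower bound on $|\Omega_n^-|$. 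The ``standard argument'' from the scalar $p$-Laplacian works only because there the exponents on both sides are equal to $p$ and cancel; the coupling in the system destroys that balance. The paper repairs this by testing the \emph{pair} of equations with $\bigl(\tfrac{\alpha+1}{p}\varphi_n^-,\tfrac{\beta+1}{q}\psi_n^-\bigr)$ and working on the set $\Omega_n^-=\{\varphi_n<0\}\cup\{\psi_n<0\}$: then Young's inequality followed by H\"older--Sobolev produces the same combination $\tfrac{\alpha+1}{p}\|\nabla\varphi_n^-\|_p^p+\tfrac{\beta+1}{q}\|\nabla\psi_n^-\|_q^q$ on both sides, which (being nonzero by Proposition~\ref{funzionicambianosegno}) cancels and leaves $1\le C\mu_n\max\{|\Omega_n^-|^{p/N},|\Omega_n^-|^{q/N}\}$. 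You should replace your single-equation test by this simultaneous one.
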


Moreover, inspired by \cite{DR}, we also construct a sequence $\{\lambda_k\}_k$ of eigenvalues of \eqref{EigenDeThélin0} with a suitable variational characterization involving the unit sphere $S^{k-1}$ of $\mathbb{R}^k$ for any $k \geq 1$. Such a sequence will be construct on the $C^1$ manifold 

\begin{align*}
\displaystyle 
\mathcal{M}:=\Biggl\{ z=(u,v) \in X \, : \,  \Phi(z)=1   \Biggr\}.
\end{align*}

\medskip
\noindent
In particular, setting

$$
\mathcal{M}_k := \{ A=\alpha(S^{k-1}) \subset \mathcal{M}  \ | \,  \alpha: S^{k-1} \to \mathcal{M} \text{ is odd and continuous}  \},
$$

\medskip
\noindent
and

\begin{equation*}
	c_k := \sup_{A \in \mathcal{M}_k} \min_{(u,v) \in A} \Psi(u,v),
\end{equation*}

\medskip
\noindent
we prove in Proposition \ref{prop 2.9} that the sequence $\{\lambda_k\}_k \subset \mathbb{R}$ where

\begin{equation}\label{lambdakINTRODUZIONE}
	\lambda_k := \frac{1}{c_k},
\end{equation}

\medskip
\noindent
is a sequence of eigenvalues of \eqref{EigenDeThélin0}, and $\lambda_1$ is exactly the first eigenvalue defined in \eqref{definizioneprimoautovalore} (see Remark \ref{definizionicoincidono} in Section 4).

In order to construct a sequence of eigenvalues of \eqref{EigenDeThélin0}, we notice that the manifold $\mathcal{M}$ is not of class $C^{1,1}$ when $ 1 <p < 2$ or $1<q<2$. We will apply a deformation lemma for $C^1$ submanifolds of a Banach space (see Theorem~\ref{BON}) based in \cite[Theorem 2.5]{BON}. This theorem requires a coupled Palais-Smale condition.

As in the single scalar  eigenvalue problem $-\Delta_p u = \eta |u|^{p-2}u$ it is unknown whether the set of the eigenvalues described by \eqref{lambdakINTRODUZIONE} contains all the eigenvalues of problem \eqref{EigenDeThélin0}.

In the context of quasilinear elliptic systems, we also recall that there are several ways to define an eigenvalue problem (see for instance \cite{SZ} for an other type of eigenvalue problem different from \eqref{EigenDeThélin0}, and also \cite{boccardodefiguerido} for a more general class of eigenvalue problems involving $(p,q)$-homogeneous functions).

Then we consider the eigenvalue problem \eqref{EigenDeThélin0}, by proving that $\lambda_1$ is isolated and simple, and admits the sequence $\{\lambda_k \}_k$ defined in \eqref{lambdakINTRODUZIONE} of eigenvalues, with the idea of proving the same results for other eigenvalue problems in a forthcoming paper.

Here we also concern with  the existence of a weak solution for a quasilinear elliptic systems in resonance around $\lambda_1$, under new sufficient Landesman-Lazer type conditions, 
extending the results by Arcoya and Orsina \cite{AO}.\\
We then consider the following system:
\begin{equation}\label{Syst0}
	\begin{cases}
		\begin{array}{ll}
			-\Delta_p u = \lambda_1 |u|^{\alpha-1} |v|^{\beta+1} u + \frac{1}{\alpha + 1} [F_s(x,u,v) - h_1(x) ]& x\in\Omega,
			\bigskip
			\\
			-\Delta_q v= \lambda_1  |u|^{\alpha+1} |v|^{\beta - 1}v  + \frac{1}{\beta + 1} [ F_t(x,u,v) - h_2(x)] & x\in \Omega,
			\bigskip
			\\
			u=v=0  & x\in \partial\Omega.
		\end{array}
	\end{cases}
\end{equation}

\medskip
\noindent
Here the nonlinearity  $F:\Omega \times \mathbb{R}^2 \to \mathbb{R}$ is a $C^1$-Carathéodory function (that is, $F(x, s, t)$ is measurable with respect to $ x \in \Omega$ for every $(s,t) \in \mathbb{R}^2$, and of class $C^1$ with respect to $(s,t) \in \mathbb{R}^2$ for a.e. $ x \in \Omega$), while $h_1 \in L^{p'}(\Omega)$ and  $h_2 \in L^{q'}(\Omega)$, where $p':= p/(p-1)$ and $q':=q/(q-1)$.
\noindent
Inspired by \cite{AO}, our aim is to obtain a weak solution of \eqref{Syst0} in the spirit of the work \cite{LANDLAZ} of Landesman and Lazer (see also \cite{ALP,AMBROSETTIMANCINI,HESS} for semilinear equations, and \cite{ANGOS,AO,BDK,DR} for the quasilinear equations).

\medskip
\noindent
We assume that $F$ satisfies also the following conditions:\\

\begin{itemize}
\item[\textbf{(F1)}] $F(x,0,0) \in L^1(\Omega)$ and there exists $M>0$ such that
$$ \displaystyle |F_s(x,s,t)| \leq M \quad \text{and} \quad |F_t(x,s,t)| \leq M \qquad \text{ for all } (x,s,t) \in \Omega \times \mathbb{R}^2.$$\\

\item[\textbf{(F2)}] For almost every $x \in \Omega$, there exist
\begin{align*}
\displaystyle 
& F_s^{++}(x)= \lim_{\substack{s \to + \infty \\ t  \to  + \infty}} F_t(x,s,t), & F_t^{++}(x)= \lim_{\substack{s \to + \infty \\ t  \to  + \infty}} F_s(x,s,t), \\
& F_s^{+-}(x)= \lim_{\substack{s \to + \infty \\ t  \to  - \infty}} F_t(x,s,t), & F_t^{+-}(x)= \lim_{\substack{s \to + \infty \\ t  \to  - \infty}} F_s(x,s,t), \\
& F_s^{-+}(x)= \lim_{\substack{s \to - \infty \\ t  \to  + \infty}} F_t(x,s,t), & F_t^{-+}(x)= \lim_{\substack{s \to - \infty \\ t  \to  + \infty}} F_t(x,s,t), \\
& F_s^{--}(x)= \lim_{\substack{s \to - \infty \\ t  \to  - \infty}} F_t(x,s,t), & F_t^{--}(x)= \lim_{\substack{s \to - \infty \\ t  \to  - \infty}} F_t(x,s,t).
\end{align*}
\end{itemize}

\medskip
\noindent
Moreover, setting

$$ \displaystyle (\varphi_1,\psi_1):=\left( \frac{\varphi_0}{ \left( \lVert \varphi_0 \rVert_{1,p}^p + \lVert \psi_0 \rVert_{1,q}^q \right)^{\frac{1}{p}}}  ,\frac{\psi_0}{ \left( \lVert \varphi_0 \rVert_{1,p}^p + \lVert \psi_0 \rVert_{1,q}^q \right)^{\frac{1}{q}}} \right),$$

\medskip
\noindent
we infer that  $(\varphi_1,\psi_1)$ is an eigenfunction associated to $\lambda_1$ with 

$$\displaystyle \lVert \varphi_1 \rVert^p_{1,p} + \lVert \psi_1 \rVert^q_{1,q}=1.$$

\medskip
\noindent
In particular, by $(p,q)$-homogeneity we can write $E_1$ given by Theorem \ref{simplicityINTRODUZIONE} also as

\begin{equation}\label{InsiemeE1riscrittoINTRODUZIONE}
 \displaystyle E_1= \bigg\{ ( \left|  \theta \right|^{\frac{1}{p}} \varphi_1 ,  \left|  \theta \right|^{\frac{1}{q}} \psi_1 ) \, \text{sgn} (\theta) \, : \, \theta \in \mathbb{R} \bigg\} \,  \bigcup \, \bigg\{ (- \left|  \theta \right|^{\frac{1}{p}} \varphi_1 ,  \left|  \theta \right|^{\frac{1}{q}} \psi_1 ) \, \text{sgn} (\theta) \, : \, \theta \in \mathbb{R} \bigg\}.
\end{equation}

\noindent
We then introduce the following sufficient Landesman-Lazer type conditions, involving $p,q,F,h_1,h_2$ and the eigenfunction $(\varphi_1,\psi_1)$ (we omit $dx$ for clarity of reading):\\
if $p < q$,  either
\begin{equation}\label{LLpminoreq1}
\begin{cases}
\displaystyle \int_{\Omega} F_s^{++}\varphi_1 \,  < \int_{\Omega} h_1 \varphi_1 \,  < \int_{\Omega} F_s^{--} \varphi_1 \bigskip\\
\text{and} \bigskip\\
\displaystyle
 \int_{\Omega} F_s^{+-}\varphi_1 \,   < \int_{\Omega} h_1 \varphi_1 \,  < \int_{\Omega} F_s^{-+} \varphi_1,
 \end{cases}
\end{equation}
or
\begin{equation}\label{LLpminoreq2}
\begin{cases}
\displaystyle
 \int_{\Omega} F_s^{--}\varphi_1 \,   < \int_{\Omega} h_1 \varphi_1 \,  < \int_{\Omega} F_s^{++} \varphi_1 \,  \bigskip\\
\text{and} \bigskip\\
\displaystyle
 \int_{\Omega} F_s^{-+}\varphi_1 \,  < \int_{\Omega} h_1 \varphi_1 \,  < \int_{\Omega} F_s^{+-} \varphi_1;
 \end{cases}
\end{equation}
if $p=q$,  either
\begin{equation}\label{LLpp1}
\begin{cases}
\displaystyle
 \into F_s^{++}\varphi_1  + F_t^{++}\psi_1 \,  <  \into h_1 \varphi_1 +  h_2 \psi_1  \,   <  \into F_s^{--} \varphi_1 +  F_t^{--} \psi_1 \, \bigskip\\
\text{and} \bigskip\\
\displaystyle
    \into F_s^{+-} \varphi_1 - F_t^{+-} \psi_1 \, <  \into h_1 \varphi_1  - h_2 \psi_1 \,  < \into F_s^{-+} \varphi_1- F_t^{-+} \psi_1,
    \end{cases}
\end{equation}
or
\begin{equation}\label{LLpp2}
\begin{cases}
\displaystyle
    \into F_s^{--} \varphi_1 + F_t^{--} \psi_1 \, <  \into h_1 \varphi_1  + h_2 \psi_1 \, < \into F_s^{++} \varphi_1+ F_t^{++} \psi_1 \,  \bigskip\\
\text{and} \bigskip\\
\displaystyle
    \into F_s^{-+} \varphi_1 - F_t^{-+} \psi_1 \, <  \into h_1 \varphi_1  - h_2 \psi_1 \,  < \into F_s^{+-} \varphi_1 - F_t^{+-} \psi_1;
\end{cases}
\end{equation}
if $p>q$,  either
\begin{equation}\label{LLpmaggioreq1}
\begin{cases}
\displaystyle
 \int_{\Omega} F_t^{++} \psi_1 \,  < \int_{\Omega} h_2 \psi_1 \,  < \int_{\Omega} F_t^{--} \psi_1  \bigskip \\
\text{and} \bigskip \\
\displaystyle
 \int_{\Omega} F_t^{-+} \psi_1\,  < \int_{\Omega} h_2 \psi_1\,  < \int_{\Omega} F_t^{+-} \psi_1, 
\end{cases} 
\end{equation}
or
\begin{equation}\label{LLpmaggioreq2}
\begin{cases}
\displaystyle
 \int_{\Omega} F_t^{--} \psi_1\,  < \int_{\Omega} h_2 \psi_1\,  < \int_{\Omega} F_t^{++} \psi_1  \bigskip \\
\text{and} \bigskip\\
\displaystyle
 \int_{\Omega} F_t^{+-} \psi_1\, < \int_{\Omega} h_2 \psi_1\,  < \int_{\Omega} F_t^{-+} \psi_1.
 \end{cases} 
\end{equation}

\noindent
Our main result is the following.

\begin{theorem}\label{mainresult}
Let $F:\Omega \times \mathbb{R}^2 \to \mathbb{R}$ be a $C^1$-Carathéodory function satisfying \textbf{(F1)} and \textbf{(F2)}. Consider $h_1 \in L^{p'}(\Omega)$ and  $h_2 \in L^{q'}(\Omega)$. If we assume that 
\begin{itemize}

\item[•]  in the case $p<q$,  either condition \eqref{LLpminoreq1} or  condition \eqref{LLpminoreq2} holds true;

\item[•]  in the case $p=q$,  either condition \eqref{LLpp1} or  condition \eqref{LLpp2} holds true;

\item[•]  in the case $p>q$,  either condition \eqref{LLpmaggioreq1} or  condition \eqref{LLpmaggioreq2} holds true;
\end{itemize}
then there exists at least a weak solution $(u,v) \in X$ for problem \eqref{Syst0}. 
\end{theorem}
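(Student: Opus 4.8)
The plan is to realize the weak solutions of \eqref{Syst0} as critical points of the $C^1$ functional $J:X\to\R$ defined by
$$J(u,v)=\Phi(u,v)-\lambda_1\Psi(u,v)-\into F(x,u,v)\,dx+\into\bigl(h_1u+h_2v\bigr)\,dx,$$
with $\Phi,\Psi$ as in \eqref{funzionalePhi}--\eqref{funzionalePsi}. By \textbf{(F1)} one has $|F(x,s,t)|\le|F(x,0,0)|+M(|s|+|t|)$, so $(u,v)\mapsto\into F(x,u,v)\,dx$ is well defined and of class $C^1$ on $X$, with differential $(w,z)\mapsto\into\bigl(F_s(x,u,v)w+F_t(x,u,v)z\bigr)\,dx$; multiplying the two equations of \eqref{Syst0} by $(\alpha+1)w$ and $(\beta+1)z$, integrating by parts and adding, one sees that $J'(u,v)=0$ in $X'$ is exactly the weak formulation of \eqref{Syst0}. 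Hence it suffices to produce a critical point of $J$.

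The core step is that $J$ satisfies the Palais--Smale condition, the main difficulty being the boundedness of Palais--Smale sequences, where \textbf{(F2)}, the Landesman--Lazer inequalities and Theorems \ref{simplicityINTRODUZIONE} and \ref{lambda1isolatoINTRODUZIONE} all enter. Arguing by contradiction, let $(z_n)=(u_n,v_n)$ be a Palais--Smale sequence with $\|z_n\|\to\infty$; by \eqref{funzionalePhi} we may assume $\Phi(z_n)\to+\infty$ along a subsequence, and we exploit the scaling invariances $\Phi(\theta^{1/p}u,\theta^{1/q}v)=\theta\,\Phi(u,v)$ and $\Psi(\theta^{1/p}u,\theta^{1/q}v)=\theta\,\Psi(u,v)$ (both consequences of \eqref{condizionepqalphabeta}) by introducing $\hat z_n:=\bigl(\Phi(z_n)^{-1/p}u_n,\ \Phi(z_n)^{-1/q}v_n\bigr)$, which lies on $\{\Phi=1\}$ and is therefore bounded in $X$. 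Writing $J(z_n)=\Phi(z_n)\bigl[1-\lambda_1\Psi(\hat z_n)\bigr]+G(z_n)$ with $G(z_n):=-\into F(x,u_n,v_n)\,dx+\into(h_1u_n+h_2v_n)\,dx$ satisfying $|G(z_n)|\le C\,\Phi(z_n)^{\max\{1/p,1/q\}}=o(\Phi(z_n))$, and using $\Phi(z_n)[1-\lambda_1\Psi(\hat z_n)]\ge0$, the weak continuity of $\Psi$ and the $(S_+)$-property of $-\Delta_p,-\Delta_q$, one deduces $\Psi(\hat z_n)\to1/\lambda_1$ and then $\hat z_n\to\hat z$ strongly in $X$, where $\Phi(\hat z)=\lambda_1\Psi(\hat z)$ and $\Phi'(\hat z)=\lambda_1\Psi'(\hat z)$; thus $\hat z$ is a $\lambda_1$-eigenfunction and, by Theorem \ref{simplicityINTRODUZIONE}, $\hat z\in E_1$ with $\Phi(\hat z)=1$, of a definite sign pattern $(\pm,\pm)$. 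Since $\varphi_1,\psi_1>0$ in $\Omega$, the components of $z_n$ then tend to $+\infty$ or $-\infty$ a.e.\ in $\Omega$ according to this sign pattern, so a careful passage to the limit in the appropriate combination of $\langle J'(z_n),(\varphi_1,0)\rangle\to0$ and $\langle J'(z_n),(0,\psi_1)\rangle\to0$ — chosen according to whether $p<q$, $p=q$ or $p>q$ so as to retain only the slowest-decaying scaling factor, using \textbf{(F2)}, the dominated convergence theorem, Picone's identity and the characterization \eqref{definizioneprimoautovalore} of $\lambda_1$ to control the principal part $\langle\Phi'(z_n),\cdot\rangle-\lambda_1\langle\Psi'(z_n),\cdot\rangle$ — produces a relation among $\into F_s^{\pm\pm}\varphi_1$, $\into F_t^{\pm\pm}\psi_1$, $\into h_1\varphi_1$, $\into h_2\psi_1$ that contradicts the relevant (strict) alternative among \eqref{LLpminoreq1}--\eqref{LLpmaggioreq2}. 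Once $(z_n)$ is bounded, the $(S_+)$-property gives a strongly convergent subsequence, so $J$ satisfies the Palais--Smale condition.

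It remains to check the geometry of $J$, and the two alternatives of each hypothesis play different roles. If the first alternative in the relevant case holds — \eqref{LLpminoreq1}, \eqref{LLpp1} or \eqref{LLpmaggioreq1} — then $J$ is coercive: along the same rescaling, if $\hat z_n$ stays at positive distance from $E_1\cap\{\Phi=1\}$ then, by simplicity and isolation of $\lambda_1$ (Theorems \ref{simplicityINTRODUZIONE} and \ref{lambda1isolatoINTRODUZIONE}, which make $E_1\cap\{\Phi=1\}$ the unique maximum set of $\Psi$ on $\{\Phi=1\}$), $1-\lambda_1\Psi(\hat z_n)\ge\delta>0$, whence $J(z_n)\ge\delta\,\Phi(z_n)+G(z_n)\to+\infty$; while if $\hat z_n\to\hat z\in E_1\cap\{\Phi=1\}$ the components of $z_n$ blow up with a definite sign, and rewriting $G(z_n)=-\into F(x,0,0)+\into u_n\bigl(h_1-\int_0^1F_s(x,\sigma u_n,\sigma v_n)\,d\sigma\bigr)+\into v_n\bigl(h_2-\int_0^1F_t(x,\sigma u_n,\sigma v_n)\,d\sigma\bigr)$, the strong $L^p$- (resp.\ $L^q$-) convergence of $\Phi(z_n)^{-1/p}u_n$ (resp.\ $\Phi(z_n)^{-1/q}v_n$) and \textbf{(F2)} force $G(z_n)\to+\infty$ precisely because the first Landesman--Lazer alternative holds. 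Since $\Phi$ is convex (hence weakly lower semicontinuous) and $\Psi$, $\into F(x,u,v)$ are weakly continuous on the reflexive space $X$ (compact Sobolev embeddings, \textbf{(F1)}), and $h_1\in L^{p'}(\Omega)$, $h_2\in L^{q'}(\Omega)$, $J$ is weakly lower semicontinuous; being coercive it attains its infimum, which is the sought weak solution. If instead the second alternative holds, the same computation gives $J\to-\infty$ along the rays of $E_1$ (on which $\Phi-\lambda_1\Psi\equiv0$) while $J\to+\infty$ transversally to $E_1$ (again by isolation of $\lambda_1$), so $J$ has a saddle-point geometry and one concludes by a linking / saddle-point theorem, the compactness being supplied by the Palais--Smale condition proved above.

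The main obstacle lies in the boundedness of Palais--Smale sequences and in the second geometric case: one must control $J$ and $J'$ along the \emph{nonlinear} one-parameter families that make up $E_1$ (so the finite-dimensional Saddle Point Theorem must be replaced by a topological linking adapted to the eigenset), and handle the mismatch $p\neq q$, which rules out a single homogeneous rescaling and forces one to track the two distinct rates $\Phi(z)^{1/p}$ and $\Phi(z)^{1/q}$ and keep only the dominant one in each limit. The remaining ingredients — $C^1$-regularity of $J$, the $(S_+)$ compactness of $-\Delta_p$ and $-\Delta_q$, and the weak lower semicontinuity — are standard.
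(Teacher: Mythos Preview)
Your overall strategy (Palais--Smale plus coercivity/linking) matches the paper's, and the $(p,q)$-rescaling $\hat z_n=(\Phi(z_n)^{-1/p}u_n,\Phi(z_n)^{-1/q}v_n)$ is exactly the right normalization. But two steps are not under control.

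\textbf{The Palais--Smale contradiction.} You propose to test $J'(z_n)$ against the fixed vectors $(\varphi_1,0)$ and $(0,\psi_1)$ and to control the principal part $\langle\Phi'(z_n)-\lambda_1\Psi'(z_n),(\varphi_1,0)\rangle$ via ``Picone's identity and the characterization of $\lambda_1$''. This does not close: after rescaling that term equals $r_n^{(p-1)/p}\bigl[\int|\nabla\bar u_n|^{p-2}\nabla\bar u_n\cdot\nabla\varphi_1-\lambda_1\int|\bar u_n|^{\alpha-1}|\bar v_n|^{\beta+1}\bar u_n\varphi_1\bigr]$, an $\infty\cdot 0$ indeterminate form whose limit you have no way to identify. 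Picone gives only an inequality (and requires a sign on $u_n$), so it does not pin down the rate. The paper avoids this entirely by the exact cancellation hidden in the combination
\[
J(z_n)-\Bigl\langle J'(z_n),\Bigl(\tfrac{u_n}{p},\tfrac{v_n}{q}\Bigr)\Bigr\rangle,
\]
for which the $(p,q)$-homogeneity kills $\Phi-\lambda_1\Psi$ identically. What survives is precisely the quantity $r_n^{1/p}[\int h_1\bar u_n-\int\!\!\int_0^1 F_s\,\bar u_n]+r_n^{1/q}[\cdots]$ (plus bounded terms), and comparing the dominant scale yields the contradiction with the relevant Landesman--Lazer inequality. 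Without this (or an equivalent) identity your argument does not reach a contradiction.

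\textbf{The linking in the second alternative.} You recognize that $E_1$ is not a linear subspace and that Rabinowitz's saddle-point theorem does not apply, but ``a topological linking adapted to the eigenset'' is the whole difficulty, not its solution. The paper makes this precise: it introduces $\Lambda_2=\{\Phi\ge\lambda_2\Psi\}$ (using the isolation of $\lambda_1$ so that $\lambda_2>\lambda_1$, whence $J$ is coercive on $\Lambda_2$), takes as boundary the four-point set $E_1^{\Theta}$ with $\Theta$ large, and shows that every path $h\in C([-1,1],X)$ with $h(1)=-h(-1)\in E_1^{\Theta}$ intersects $\Lambda_2$. The intersection is obtained by pushing $h$ to $\{\Phi=1\}$ via $\pi(u,v)=(\Phi^{-1/p}u,\Phi^{-1/q}v)$, gluing with its antipode to produce an odd continuous map $S^1\to\mathcal M$, and then invoking the min-max characterization \eqref{ck}--\eqref{lambdakSfera} of $\lambda_2$. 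This construction is what replaces the linear splitting used in \cite{AO}; your sketch does not supply it.

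The remaining parts of your outline (weak lower semicontinuity, $(S_+)$-compactness, coercivity in the first alternative via the contradiction/rescaling route) are essentially the paper's.
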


In the proof of Theorem \ref{mainresult}, we show that in any case the Euler functional $J$ associated to \eqref{Syst0} satisfies the compactness condition of Palais-Smale.

In addition, under the assumptions \eqref{LLpminoreq1} if $p<q$, \eqref{LLpp1} if $p=q$ and \eqref{LLpmaggioreq1} if $p>q$, the functional $J$ is coercive on $X$, and so \eqref{Syst0} has a solution. 

On the other hand, that is under the assumptions  \eqref{LLpminoreq2} if $p<q$, \eqref{LLpp2} if $p=q$ and \eqref{LLpmaggioreq2} if $p>q$, the extension of the results contained in \cite{AO} to problem \eqref{Syst0} is not straightforward, since the set $E_1$  given by \eqref{InsiemeE1riscrittoINTRODUZIONE} is not a vector subspace of $X$.

Indeed, in \cite{AO} it was possible to apply the saddle point theorem of Rabinowitz (see  \cite{RAB}), exploiting the fact that whenever $W_0^{1,p}(\Omega)$ is splitted as the direct sum of $\langle \varphi \rangle$ and $V$, where $\varphi$ is a nontrivial eigenfunction associated to the first eigenvalue $\mu_1$ of $-\Delta_p$ on $\Omega$ with respect to Dirichlet boundary condition, then from standard argument it follows that there exists $\bar{\lambda} > \mu_1$ such that $\into \left| \nabla u \right|^p \, dx \geq \bar{\lambda} \into |u|^p \, dx $ for any $u \in V$. In particular, this argument requires just the fact the eigenspace $\langle  \varphi \rangle$ associated to $\mu_1$ is a finite-dimensional vector subspace of $W_0^{1,p}(\Omega)$, without using any other information on the spectrum of the $p$-Laplacian, even the fact that $\mu_1$ is isolated.

Here we cannot apply the same argument, since the set $E_1$ given by \eqref{InsiemeE1riscrittoINTRODUZIONE} is never a vector subspace of $X$, even in the case $p=q$, in which case we have the union of two vector subspaces. We overcome this difficulty  by using the fact that $\lambda_1$ is isolated and the variational characterization of $\lambda_2$, therefore we are able to prove that two suitable sets satisfy the geometry of saddle point and are linked in a way that allows the application of standard minimax theorems (see \cite[Theorem 3.4]{STRUWE}).

\section{SIMPLICITY OF $\lambda_1$}

\medskip
\noindent
The functionals $\Phi : X \to \re$ and $\Psi:X \to \re$ defined in \eqref{funzionalePhi} and \eqref{funzionalePsi} are of class $C^1$ and for any $(u_0,v_0), (u,v) \in X$ we have
\begin{align*}
\displaystyle \langle \Phi '(u_0,v_0), (u,v)  \rangle
&  = (\alpha+1) \int_{\Omega} |\nabla u_0|^{p-2} \nabla u_0 \cdot \nabla u \, dx \\
& + (\beta+1) \int_{\Omega} |\nabla v_0|^{q-2} \nabla v_0 \cdot \nabla v \, dx
\end{align*}
and
\begin{align*}
\displaystyle \langle \Psi '(u_0,v_0), (u,v)  \rangle 
& = (\alpha + 1) \into \left| u_0 \right|^{\alpha-1}  \left| v_0 \right|^{\beta+1} u_0 u \, dx \\
& + (\beta + 1) \into \left| u_0 \right|^{\alpha+1}  \left| v_0 \right|^{\beta-1} v_0 v \, dx.
\end{align*}
By \eqref{condizionepqalphabeta} we get that $\Phi$ and $\Psi$ are $(p,q)$-homogeneous functionals, i.e.
\begin{equation*}
	\Phi(\theta^{\frac 1 p}u,\theta^{\frac 1 q}v)=\theta\Phi(u,v), \quad \Psi(\theta^{\frac 1 p}u,\theta^{\frac 1 q}v)=\theta\Psi(u,v) \qquad \forall \theta>0,\ (u,v)\in X.
\end{equation*}

Notice that $\Psi(u,v)=0$ whenever $u(x)v(x)=0$ a.e. $x \in \Omega$. Hence, denoting with $\tilde{X}$ the set of $(u,v) \in X$ such that the product $uv$ is different from the zero function on $\Omega$, i.e.  
$$\tilde{X}:= \{ (u,v) \in X \, : \, u v \not \equiv 0\},$$
we can characterize $\displaystyle \lambda_1 := \inf_{z \in \Sigma} \Phi(z)$ also as
$$ \displaystyle \lambda_1 = \inf_{z \in \tilde{X}} \frac{\Phi(z)}{\Psi(z)}.$$

In \cite[Theorem 2]{DT} de Thélin showed through Tolksdorf regularity results \cite{tolksdorf1983} that every nontrivial eigenfunction  $(\varphi,\psi)$ satisfies $\varphi \in C^{1,\eta}(\overline{\Omega})$, $\psi \in C^{1,\xi}(\overline{\Omega})$. In addition, through Vazquez's maximum principle \cite{VAZ}, for every nontrivial eigenfunction $(\varphi_0, \psi_0)$ associated to $\lambda_1$ with   $\varphi_0 \geq 0$  and $\psi_0 \geq 0$, we have  necessarily $\varphi_0 > 0,$ $\psi_0 > 0$ in $\Omega$ and  $\frac{\partial \varphi_0}{\partial \nu}, \frac{\partial \psi_0}{\partial \nu} < 0$ on $\partial \Omega$, where $\nu$ denotes the outward normal to $\partial \Omega$. This result is easily extended to any eigenvalue $\lambda$ of \eqref{EigenDeThélin0}.

\medskip
\noindent
{\mbox {\it Proof of Theorem~\ref{simplicityINTRODUZIONE}.~}  It is contained in \cite{DT}, but for the reader's convenience, we provide a sketch here.
First of all, we observe that any $(\varphi, \psi) \in E_1$ is an eigenfunction of \eqref{EigenDeThélin0}  associated to the eigenvalue $\lambda_1$. Conversely, let $(\varphi,\psi)$ be an eigenfunction associated to $\lambda_1$. Since
$$ \displaystyle \frac{ \Phi (|\varphi|,|\psi|) }{\Psi (|\varphi|,|\psi|)}=\frac{ \Phi (\varphi,\psi) }{\Psi (\varphi,\psi)} = \lambda_1,   $$
we deduce that $(\left|\varphi\right|, \left|\psi \right|)$ is an eigenfunction associated to $\lambda_1$.\\ We know that $|\varphi| \in C^{1,\eta}(\overline{\Omega})$, $|\psi| \in C^{1,\xi}(\overline{\Omega})$ and $|\varphi| > 0,$ $|\psi| > 0$ in $\Omega$. Therefore, since $\Omega$ is a connected set, both $\varphi$ and $\psi$ cannot change sign.\\
Let us assume for example $\varphi > 0$ and $\psi > 0$.
Denoting with
$$ \displaystyle \theta := \into \left| \varphi \right|^{\alpha+1}  \left| \psi \right|^{\beta+1} \, dx, $$
\noindent
we have that 
$$ \displaystyle (\bar{\varphi}, \bar{\psi}):= \left(  \frac{\varphi}{\theta^{\frac{1}{p}}} ,  \frac{\psi}{\theta^{\frac{1}{q}}} \right) $$
is an eigenfunction associated to $\lambda_1$ such that $\bar{\varphi} > 0$, $\bar{\psi} > 0$ and $(\bar{\varphi}, \bar{\psi}) \in \Sigma$. By \cite[Theorem 3]{DT} 
there is a unique eigenfunction $(\varphi_0,\psi_0)$ associated to $\lambda_1$  satisfying $(\varphi_0,\psi_0) \in \Sigma$,    $\varphi_0 >0$ and $ \psi_0>0$  in $\Omega$ and
we deduce $(\bar{\varphi}, \bar{\psi})= (\varphi_0,\psi_0)$, hence $(\varphi,\psi) \in E_1$. We have proved the result assuming $\varphi > 0$ and $\psi > 0$.

The same result holds if we argue as before on the vector $(\varphi, - \psi)$ if $\varphi > 0$ and $\psi <0$, $(-\varphi,  \psi)$ if $\varphi <  0$ and $\psi > 0$, and $(-\varphi, - \psi)$ if $\varphi < 0$ and $\psi < 0$.

\qed

\section{ISOLATION OF $\lambda_1$}

\medskip
\noindent
To prove that $\lambda_1$ is isolated, we follow the ideas of \cite[Theorem 1]{AH}  by using the Picone's identity (see \cite[Lemma 24]{AG}).

\begin{theorem}\label{Picone}
Let $u \geq 0$ and $v >0$ be differentiable functions, and let $r >1$. Denote
\begin{align*}
\displaystyle
L_r(u,v) & =  \left| \nabla u \right|^r + (r-1) \left( \frac{u}{v} \right)^r \left| \nabla v \right|^r - r \left( \frac{u}{v} \right)^{r-1}\left| \nabla v \right|^{r-2} \nabla v \cdot \nabla u, \\
R_r(u,v) & =  \left| \nabla u \right|^r - \left| \nabla v \right|^{r-2} \nabla v \cdot  \nabla \left(    \frac{u^{r}}{v^{r-1}} \right).
\end{align*}
Then 
$$ \displaystyle L_r(u,v) = R_r(u,v) \geq 0.$$
Moreover, $L_r(u,v)=0$ a.e. in $\Omega$ if and only if $u=kv$ for some constant $k$. 
\end{theorem}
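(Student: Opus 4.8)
The plan is to establish the algebraic identity $L_r(u,v)=R_r(u,v)$ by a direct differentiation, and then to obtain the inequality $L_r(u,v)\ge 0$ by recognising the right-hand side as an instance of Young's inequality combined with the Cauchy--Schwarz inequality; the equality statement will follow by tracking when both of these are equalities.

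\emph{Step 1: the identity.} Since $u\ge 0$ and $v>0$ are differentiable, the quotient rule gives
\[
\nabla\!\left(\frac{u^{r}}{v^{r-1}}\right)
= r\left(\frac{u}{v}\right)^{r-1}\nabla u-(r-1)\left(\frac{u}{v}\right)^{r}\nabla v .
\]
Taking the scalar product with $\left|\nabla v\right|^{r-2}\nabla v$ and substituting into the definition of $R_r(u,v)$ produces exactly the three terms defining $L_r(u,v)$, which proves $L_r(u,v)=R_r(u,v)$.

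\emph{Step 2: nonnegativity.} I would introduce the vector field $b:=\frac{u}{v}\,\nabla v$ and write $a:=\nabla u$; using $u/v\ge 0$ one checks that $\left(\frac{u}{v}\right)^{r-1}\left|\nabla v\right|^{r-2}\nabla v=\left|b\right|^{r-2}b$ and $\left(\frac{u}{v}\right)^{r}\left|\nabla v\right|^{r}=\left|b\right|^{r}$, so that
\[
L_r(u,v)=\left|a\right|^{r}+(r-1)\left|b\right|^{r}-r\left|b\right|^{r-2}b\cdot a .
\]
Cauchy--Schwarz gives $\left|b\right|^{r-2}b\cdot a\le \left|b\right|^{r-1}\left|a\right|$, and Young's inequality with exponents $r$ and $r/(r-1)$ gives $r\left|b\right|^{r-1}\left|a\right|\le \left|a\right|^{r}+(r-1)\left|b\right|^{r}$; chaining these yields $L_r(u,v)\ge 0$ pointwise.

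\emph{Step 3: equality.} Equality in Step 2 forces equality simultaneously in Young's inequality, which gives $\left|a\right|=\left|b\right|$, and in Cauchy--Schwarz, which forces $a$ and $b$ to be nonnegatively proportional; together these give $a=b$, i.e. $\nabla u=\frac{u}{v}\nabla v$ a.e. in $\Omega$. This is equivalent to $\nabla\!\left(\frac{u}{v}\right)=\frac1v\bigl(\nabla u-\frac{u}{v}\nabla v\bigr)=0$ a.e., so by connectedness of $\Omega$ the ratio $u/v$ is constant and $u=kv$; conversely, substituting $u=kv$ into $L_r$ gives $0$. The computations in Steps 1--2 are routine; the only point requiring a little care is this last upgrade from an almost-everywhere gradient identity to the global relation $u=kv$, which one handles by working on the open set $\{u>0\}$, where $u/v$ is a genuine differentiable function with vanishing gradient, together with the fact that $\nabla u=0$ a.e. on the remaining (negligible) set $\{u=0\}$.
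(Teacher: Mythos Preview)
Your proof is correct and follows the standard argument for Picone's identity. Note, however, that the paper does not actually supply a proof of this theorem: it is quoted as a known result with references to \cite[Theorem 1]{AH} and \cite[Lemma 24]{AG}, so there is nothing in the paper to compare your argument against beyond the fact that your Steps 1--3 reproduce precisely the classical proof found in those references.
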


\begin{proposition}\label{funzionicambianosegno}
Let $(\varphi,\psi)$ be an eigenfunction of \eqref{EigenDeThélin0} associated to the eigenvalue $\lambda$ with $\lambda > \lambda_1$. Then both $\varphi$ and $\psi$ change sign. 
\end{proposition}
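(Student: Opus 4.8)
The plan is to argue by contradiction, using Picone's identity (Theorem~\ref{Picone}) together with the variational characterization $\lambda_1 = \inf_{\tilde X} \Phi/\Psi$ and the knowledge from Theorem~\ref{simplicityINTRODUZIONE} that the positive eigenfunction $(\varphi_0,\psi_0)$ is essentially unique. Suppose $(\varphi,\psi)$ is an eigenfunction associated to $\lambda > \lambda_1$ and suppose, for contradiction, that (say) $\varphi$ does not change sign, so after replacing $(\varphi,\psi)$ by $(-\varphi,\psi)$ if necessary we may assume $\varphi \geq 0$ in $\Omega$. Since $(\varphi,\psi)$ is nontrivial it cannot be semitrivial, so $\varphi \not\equiv 0$; by the regularity ($\varphi \in C^{1,\eta}(\overline\Omega)$) and Vazquez's maximum principle applied to the first equation $-\Delta_p \varphi = \lambda |\varphi|^{\alpha-1}|\psi|^{\beta+1}\varphi \geq 0$, we get $\varphi > 0$ in $\Omega$.

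First I would test the equation for $\varphi_0$ (the positive $\lambda_1$-eigenfunction, which we may take normalized in $\Sigma$) against a Picone-type test function built from $\varphi$, and symmetrically test the equation for $\varphi$ against a test function built from $\varphi_0$. Concretely, following \cite{AH}, I would use $\dfrac{\varphi_0^{p}}{(\varphi+\varepsilon)^{p-1}}$ as a test function in the weak formulation of the first equation for $\varphi$, and $\dfrac{(\varphi+\varepsilon)^{p}}{\varphi_0^{\,p-1}}$ (legitimate since $\varphi_0 > 0$ with $\partial_\nu \varphi_0 < 0$ on $\partial\Omega$, so the quotient is bounded) in the first equation for $\varphi_0$, then let $\varepsilon \to 0^+$. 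Adding the two resulting identities and invoking $L_p(\varphi_0,\varphi) = R_p(\varphi_0,\varphi) \geq 0$ and $L_p(\varphi,\varphi_0) = R_p(\varphi,\varphi_0)\geq 0$ from Theorem~\ref{Picone}, the gradient terms collapse to a nonnegative quantity while the right-hand sides produce, after using both eigenvalue equations, a combination of the zero-order terms $\lambda_1 |\varphi_0|^{\alpha-1}|\psi_0|^{\beta+1}$ and $\lambda |\varphi|^{\alpha-1}|\psi|^{\beta+1}$ integrated against the Picone test functions. The same scheme has to be run in parallel on the second equations using the exponent $q$, with test functions involving $\psi$ and $\psi_0$. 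Combining the $p$-part and the $q$-part with the weights $(\alpha+1)$ and $(\beta+1)$ — matching the normalization in the definition of an eigenfunction and in $\Phi$ — the cross terms should telescope so that one is left with an inequality of the shape
\[
0 \;\le\; (\lambda_1 - \lambda)\cdot(\text{a strictly positive integral}),
\]
the positive integral being essentially $\Psi$-type quantities evaluated on the (positive) functions involved. Since $\lambda > \lambda_1$ this forces that integral to vanish, which is impossible because $\varphi,\psi,\varphi_0,\psi_0$ are all strictly positive in $\Omega$; hence the assumption that $\varphi$ has constant sign is untenable. The argument for $\psi$ is identical by symmetry, exchanging the roles of $(\varphi,p,\alpha)$ and $(\psi,q,\beta)$.

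The main obstacle I anticipate is bookkeeping in the coupled Picone computation: the de Thélin system is \emph{not} diagonal — the first equation for $\varphi$ carries the weight $|\psi|^{\beta+1}$ and vice versa — so after inserting Picone test functions one does not immediately get a clean comparison of $\lambda_1$ with $\lambda$ on a single component, and one must carefully exploit the coupling $\tfrac{\alpha+1}{p}+\tfrac{\beta+1}{q}=1$ and the $(p,q)$-homogeneity to see the mixed terms recombine into $\Psi(\varphi,\psi)$ and $\Psi(\varphi_0,\psi_0)$. A secondary technical point is justifying the limit $\varepsilon \to 0^+$ in the test functions: one needs the $L_p,L_q \geq 0$ terms (which are only known to be nonnegative in the limit) to control the passage, and one needs $\varphi_0^{p}/(\varphi+\varepsilon)^{p-1} \in W_0^{1,p}(\Omega)$ uniformly, which follows from $\varphi_0 \in C^1(\overline\Omega)$ vanishing on $\partial\Omega$ together with Hopf-type bounds $\partial_\nu\varphi < 0$, $\partial_\nu\varphi_0 < 0$; these are exactly the ingredients recorded before Theorem~\ref{Picone}. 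Finally, the equality case of Picone ($L_r(u,v)=0 \iff u = kv$) may also be invoked to extract extra information if a strict inequality is needed, but for the mere sign-change conclusion the inequality $(\lambda_1-\lambda)\int(\cdots)\geq 0$ with a strictly positive integrand already yields the contradiction.
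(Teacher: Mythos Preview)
There is a genuine logical gap in your setup that breaks the scheme you outline. You begin by assuming, for contradiction, that \emph{only} $\varphi$ has constant sign (correctly noting that this is the negation of ``both change sign''), and then you announce that ``the same scheme has to be run in parallel on the second equations using the exponent $q$, with test functions involving $\psi$ and $\psi_0$.'' But the $q$-Picone test function $\psi_0^{\,q}/\psi^{\,q-1}$ (or its $\varepsilon$-regularized version) is only available if $\psi$ has constant sign, which you have \emph{not} assumed. So under your hypothesis you cannot run the $q$-side at all, and running Picone on the $p$-side alone produces
\[
0 \;\le\; \lambda_1 \int_\Omega \varphi_0^{\alpha+1}\psi_0^{\beta+1}\,dx \;-\; \lambda\int_\Omega \varphi^{\,\alpha+1-p}|\psi|^{\beta+1}\varphi_0^{\,p}\,dx,
\]
which is not a contradiction because the two integrals are not comparable. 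Your closing remark that ``the argument for $\psi$ is identical by symmetry'' does not repair this: it would establish that $\psi$ changes sign \emph{assuming} $\psi$ has constant sign, but again via a coupled Picone that silently needs $\varphi>0$.

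The paper avoids this by taking the contradiction hypothesis to be that \emph{both} $\varphi$ and $\psi$ have constant sign; then one can legitimately test the $(\varphi,\psi)$-system with $\bigl(\tfrac{\alpha+1}{p}\varphi_0^{\,p}/\varphi^{\,p-1},\ \tfrac{\beta+1}{q}\psi_0^{\,q}/\psi^{\,q-1}\bigr)$ and combine the two Picone identities. The second point you are missing is that the resulting zero-order terms do \emph{not} telescope: the crucial step is Young's inequality with the conjugate pair $p/(\alpha+1)$ and $q/(\beta+1)$, which gives
\[
\varphi_0^{\alpha+1}\psi_0^{\beta+1} \;\le\; \tfrac{\alpha+1}{p}\,\varphi^{\alpha+1-p}\psi^{\beta+1}\varphi_0^{\,p} \;+\; \tfrac{\beta+1}{q}\,\varphi^{\alpha+1}\psi^{\beta+1-q}\psi_0^{\,q},
\]
and this is precisely what converts the Picone output into a comparison of $\lambda_1$ with $\lambda$. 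With this in hand one can indeed reach the clean form $0 \le (\lambda_1-\lambda)\int_\Omega \varphi_0^{\alpha+1}\psi_0^{\beta+1}\,dx < 0$ that you anticipate, so your instinct that the equality case of Picone is dispensable is correct; but you will not get there without Young, and you will not get either component of the Picone machinery to run without assuming the corresponding function is one-signed.
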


\begin{proof} 
Let $(\varphi,\psi)$ be an eigenfunction of \eqref{EigenDeThélin0} associated to the eigenvalue $\lambda$ with $\lambda > \lambda_1$. By contradiction, assume that both $\varphi$ and $\psi$ have constant sign, for example $\varphi \geq 0$ and $\psi \geq 0$. We know that 
 $\varphi \in C^{1,\eta}(\overline{\Omega})$ and $\psi \in C^{1,\xi}(\overline{\Omega})$ for some $\eta, \xi \in (0,1)$, and that $\varphi > 0, \psi >0$ in $\Omega$.

Using that  $(\varphi_0, \psi_0)$ is  an eigenfunction associated to $\lambda_1$ and $(\varphi,\psi)$ is an eigenfunction  associated to $\lambda$, for any $(u,v) \in X$ we have
\begin{align}\label{autofunzionephi0psi0}
\displaystyle 
& \into \left| \nabla \varphi_0  \right|^{p-2} \nabla \varphi_0 \cdot  \nabla u \, dx + \into \left| \nabla \psi_0   \right|^{q-2} \nabla \psi_0 \cdot  \nabla v \, dx \\
= & \lambda_1 \left[ \into |\varphi_0|^{\alpha-1} |\psi_0|^{\beta+1} \varphi_0 u \, dx   + \into |\varphi_0|^{\alpha+1} |\psi_0|^{\beta-1} \psi_0 v \, dx     \right], \nonumber
\end{align}
and
\begin{align*}
\displaystyle 
& \into \left| \nabla \varphi  \right|^{p-2} \nabla \varphi \cdot  \nabla u \, dx + \into \left| \nabla \psi   \right|^{q-2} \nabla \psi \cdot  \nabla v \, dx \\
= & \lambda \left[ \into |\varphi|^{\alpha-1} |\psi|^{\beta+1} \varphi u \, dx   + \into |\varphi|^{\alpha+1} |\psi|^{\beta-1} \psi v \, dx     \right]. \nonumber
\end{align*}
By testing \eqref{autofunzionephi0psi0} in $ \displaystyle (u,v)= \left( \frac{\alpha + 1}{p}  \varphi_0  , \frac{\beta +1}{q}  \psi_0\right), $  we get
\begin{align*}
\displaystyle
\frac{\alpha + 1}{p} \into \left| \nabla \varphi_0 \right|^p \, dx  + \frac{\beta + 1}{q} \into \left| \nabla \varphi_0 \right|^p \, dx  = \lambda_1 \into \varphi_0^{\alpha + 1} \psi_0^{\beta +1}    \, dx.
\end{align*}
By testing second equation in
$ (u,v)= \displaystyle \left(  \frac{\alpha + 1}{p} \frac{\varphi_0^{p}}{\varphi^{p-1}} ,\frac{\beta + 1}{q} \frac{\psi_0^{q}}{\psi^{q-1}}    \right),$
we get
\begin{align*}
\displaystyle
& \frac{\alpha + 1}{p} \into \left| \nabla \varphi  \right|^{p-2} \nabla \varphi \cdot  \nabla  \left( \frac{\varphi_0^{p}}{\varphi^{p-1}}    \right)  \, dx  +  \frac{\beta + 1}{q} \into \left| \nabla \psi  \right|^{q-2} \nabla \psi \cdot  \nabla  \left( \frac{\psi_0^{q}}{\psi^{q-1}}    \right)  \, dx \\
= &  \lambda \left[  \frac{\alpha + 1}{p} \into \varphi^{\alpha+1-p} \psi^{\beta+1} \varphi_0^{p} \, dx  +  \frac{\beta + 1}{q} \into \varphi^{\alpha+1} \psi^{\beta+1-q} \psi_0^{q} \, dx     \right].
\end{align*}

\medskip
\noindent
By Theorem \ref{Picone}, and recalling that $\lambda_1 < \lambda,$   we have
\begin{align}\label{dimfunznoncambsegn}
\displaystyle 
0 
& \leq \frac{\alpha+1}{p} \into L_p(\varphi_0, \varphi) \, dx + \frac{\beta+1}{q} \into L_q(\psi_0, \psi) \, dx \\
& = \frac{\alpha+1}{p} \into R_p(\varphi_0, \varphi) \, dx + \frac{\beta+1}{q} \into R_q(\psi_0, \psi) \, dx   \nonumber\\
& = \frac{\alpha+1}{p} \into \left| \nabla \varphi_0  \right|^p \, dx - \frac{\alpha+1}{p} \into \left| \nabla \varphi \right|^{p-2} \nabla \varphi \cdot  \nabla \left(    \frac{\varphi_0^{p}}{\varphi^{p-1}} \right) \, dx \nonumber \\
& \quad + \frac{\beta+1}{q} \into \left| \nabla \psi_0  \right|^q \, dx  \nonumber  - \frac{\beta+1}{q} \into \left| \nabla \psi \right|^{q-2} \nabla \psi \cdot  \nabla \left(    \frac{\psi_0^{q}}{\psi^{q-1}} \right) \, dx \nonumber \\
& =  \lambda_1 \into \varphi_0^{\alpha + 1} \psi_0^{\beta +1}    \, dx         \nonumber \\
& \quad - \lambda \left[  \frac{\alpha + 1}{p} \into \varphi^{\alpha+1-p} \psi^{\beta+1} \varphi_0^{p} \, dx  +  \frac{\beta + 1}{q} \into \varphi^{\alpha+1} \psi^{\beta+1-q} \psi_0^{q} \, dx     \right]\nonumber \\
& \leq \lambda  \left[ \into ( \, \varphi_0^{\alpha + 1} \psi_0^{\beta +1}     -  \frac{\alpha + 1}{p} \varphi^{\alpha+1-p} \psi^{\beta+1} \varphi_0^{p}   - \frac{\beta + 1}{q} \varphi^{\alpha+1} \psi^{\beta+1-q} \psi_0^{q} \, ) \, dx    \right].  \nonumber
\end{align}
However, by Young inequality with conjugated exponents $p/(\alpha + 1)$ and $q/(\beta +1)$, we also have
\begin{align*}
\displaystyle
\varphi_0^{\alpha + 1} \psi_0^{\beta +1}  
& = \left( \varphi_0^{\alpha + 1} \frac{\psi^{\frac{(\alpha+1)(\beta+1)}{p}}}{\varphi^{\frac{(\alpha+1)(\beta+1)}{q}}}  \right) \, \left(  \psi_0^{\beta +1} \frac{\varphi^{\frac{(\alpha+1)(\beta+1)}{q}}}{\psi^{\frac{(\alpha+1)(\beta+1)}{p}}} \right) \\
& \leq \frac{\alpha+1}{p} \varphi_0^p \psi^{\beta+1} \varphi^{\alpha+1-p} + \frac{\beta+1}{q} \psi_0^q \varphi^{\alpha+1}  \psi^{\beta+1-q},
\end{align*}
whence, considering \eqref{dimfunznoncambsegn}, we get 
$$ \displaystyle L_p(\varphi_0, \varphi) = 0 \qquad \text{and} \qquad L_q(\psi_0, \psi)=0.$$
Applying again Theorem \ref{Picone}, we deduce $\varphi_0 = c_1 \varphi$ and $\psi_0= c_2 \psi$ for some positive constant $c_1$ and $c_2$. Observe now that by \eqref{autofunzionephi0psi0} we infer
$$ \displaystyle \into \left| \nabla \varphi_0 \right|^p \, dx = \lambda_1 \into \varphi_0^{\alpha + 1} \psi_0^{\beta +1}   \, dx = \into \left| \nabla \psi_0 \right|^q \, dx.$$
Substituting $\varphi_0 = c_1 \varphi$ and $\psi_0= c_2 \psi$, we get $c_1^p=c_2^q$. Denoting with $\theta:= 
c_1^p$, we have shown that $(\varphi_0, \psi_0) = (\theta^{\frac{1}{p}} \varphi, \theta^{\frac{1}{q}} \psi)$ by which $(\varphi_0, \psi_0)$ is an eigenfunction associated to $\lambda$, hence a contradiction.

The proof is similar in the other sign cases: either $\psi \leq 0 \leq \varphi,$ or $\psi,\varphi \leq 0$ or 
$\varphi \leq 0 \leq \psi$.
\end{proof}

\medskip
\noindent
{\mbox {\it Proof of Theorem~\ref{lambda1isolatoINTRODUZIONE}.~}
By the characterization of $\lambda_1$, for every eigenvalue $\lambda$ of \eqref{EigenDeThélin0} we have $\lambda \geq \lambda_1$. We prove the theorem by contradiction, assuming that $\{\mu_n\}_n$ is a sequence of eigenvalues of \eqref{EigenDeThélin0} such that $\mu_n > \lambda_1$ for any $n \geq 2$ and  $\mu_n \to \lambda_1$ as $n \to + \infty$. For every $n \geq 2$ consider an eigenfunction $z_n=(\varphi_n,\psi_n) \in X$ associated to $\mu_n$ with 
$$ \|\varphi_n\|_{1,p}^{p}+ \|\psi_n\|_{1,q}^q=1.$$
Hence the sequence $\{z_n\}_n$ is bounded and there exists a subsequence, still denoted by $\lbrace{z_n\rbrace}_n,$ that converges to some $z=(\varphi,\psi)$ weakly in $X$ and strongly in $L^p(\Omega)\times L^q(\Omega)$. Since $\varphi_n$ converges to $\varphi$  strongly in $L^p(\Omega)$ and $\psi_n$ converges to $\psi$  strongly in $L^q(\Omega)$, we have $\varphi_n(x) \to \varphi(x)$ and $\psi_n(x) \to \psi(x)$ a.e. in $\Omega$ as $n \to + \infty$. Moreover, passing to a subsequence, we can assume that  there exists $\bar{\varphi} \in L^{p}(\Omega)$ and $\bar{\psi} \in L^{q}(\Omega)$ such that $|\varphi_n(x)|\leq \bar{\varphi}(x)$ and $|\psi_n(x)|\leq \bar{\psi}(x)$ a.e. in $\Omega$ and for all $n \geq 1$.\\
Since $z_n$ is an eigenfunction associated to $\mu_n$, it satisfies
\begin{align}\label{zneigenfunction}
\displaystyle 
& \into \left| \nabla \varphi_n  \right|^{p-2} \nabla \varphi_n \cdot  \nabla u \, dx + \into \left| \nabla \psi_n   \right|^{q-2} \nabla \psi_n \cdot  \nabla v \, dx \\
= & \mu_n \left[ \into |\varphi_n|^{\alpha-1} |\psi_n|^{\beta+1} \varphi_n u \, dx   + \into |\varphi_n|^{\alpha+1} |\psi_n|^{\beta-1} \psi_n v \, dx     \right], \nonumber
\end{align}
for any $(u,v) \in X$.\\
By testing previous equation in $(u,v)=(\varphi_n - \varphi,0)$, we get
\begin{align}\label{compisol1}
\displaystyle 
\into \left| \nabla \varphi_n  \right|^{p-2} \nabla \varphi_n \cdot  \nabla ( \varphi_n - \varphi) \, dx = \mu_n \into |\varphi_n|^{\alpha-1} |\psi_n|^{\beta+1} \varphi_n (\varphi_n - \varphi) \, dx 
\end{align}
By Young inequality with conjugated exponents $p/(\alpha+1)$ and $q/(\beta+1)$, we get
\begin{align*}
\displaystyle 
&\left|  \, |\varphi_n(x)|^{\alpha-1} |\psi_n(x)|^{\beta+1} \varphi_n(x) (\varphi_n(x) - \varphi(x) )  \,  \right|\\
\leq &  \, |\varphi_n(x)|^{\alpha} |\psi_n(x)|^{\beta+1}(  \left| \varphi_n(x) \right| + \left| \varphi(x) \right| ) \\
\leq & 2 \bar{\varphi}(x)^{\alpha+1} \bar{\psi}(x)^{\beta+1} \\
\leq & 2 \left( \frac{\alpha+1}{p}  \bar{\varphi}(x)^p   + \frac{\beta + 1}{q}   \bar{\psi}(x)^q \right) \in L^1(\Omega).
\end{align*} 
Hence, by dominated convergence theorem we have 
$$ \displaystyle \lim_{n \to \infty} \into |\varphi_n|^{\alpha-1} |\psi_n|^{\beta+1} \varphi_n (\varphi_n - \varphi) \, dx =0.$$
Considering also that $\mu_n \to \lambda_1>0$ and passing to limit as $n \to + \infty$ in \eqref{compisol1}, we get 
$$ \displaystyle \lim_{n \to \infty} \into \left| \nabla \varphi_n  \right|^{p-2} \nabla \varphi_n \cdot  \nabla ( \varphi_n - \varphi) \, dx =0.$$
Now, the weak lower semicontinuity and the convexity of  $\| \cdot \|^p_{1,p}$ imply
\begin{align*}
\|\varphi\|_{1,p}^p & \leq \liminf_{n \to \infty} \|\varphi_n\|_{1,p}^p
\leq \limsup_{n \to \infty} \|\varphi_n\|_{1,p}^p = \limsup_{n \to \infty} \into |\nabla \varphi_n|^p \, dx\\
& \leq \limsup_{n \to \infty} \left[ \into |\nabla \varphi|^p \, dx + p \into |\nabla \varphi_n|^{p-2} \nabla \varphi_n \cdot \nabla  \left(\varphi_n - \varphi \right) \, dx \right] = \|\varphi\|_{1,p}^p,
\end{align*}
i.e. $\|\varphi_n\|_{1,p} \to \|\varphi\|_{1,p},$ and by uniform convexity of $W_0^{1,p}(\Omega)$ we deduce $\varphi_n \rightarrow \varphi$ strongly in $\sob$. Analogously, we get that $\psi_n \rightarrow \psi$ strongly in $\sobq$.\\
In particular,
\[\|\varphi\|_{1,p}^p + \|\psi \|_{1,q}^q =
\lim_{n \to \infty} \left( \|\varphi_n\|_{1,p}^p +\|\psi_n\|_{1,q}^q \right)
=1\]
and $z=(\varphi,\psi) \neq 0.$
By testing \eqref{zneigenfunction} in $(u,v) =\left( \frac{\alpha+1}{p} \varphi_n, \frac{\beta + 1}{q} \psi_n \right)$, we get 
$$ \frac{\alpha+1}{p} \into \left| \nabla \varphi_n \right|^p \, dx + \frac{\beta+1}{q} \into \left| \nabla \psi_n \right|^q \, dx = \mu_n \into \left| \varphi_n \right|^{\alpha+1} \left| \psi_n \right|^{\beta+1} \, dx,$$
and passing to limit as $n \to + \infty$ we deduce
$$ \frac{\alpha+1}{p} \into \left| \nabla \varphi \right|^p \, dx + \frac{\beta+1}{q} \into \left| \nabla \psi \right|^q \, dx = \lambda_1 \into \left| \varphi \right|^{\alpha+1} \left| \psi \right|^{\beta+1} \, dx.$$
By last identity since $z=(\varphi,\psi) \neq (0,0),$ we get $z \in \tilde{X}$ and
$$ \displaystyle \lambda_1= \frac{\displaystyle \frac{\alpha+1}{p} \into \left| \nabla \varphi \right|^p \, dx + \frac{\beta+1}{q} \into \left| \nabla \psi \right|^q \, dx}{ \displaystyle \into \left| \varphi \right|^{\alpha+1} \left| \psi \right|^{\beta+1} \, dx  } = \frac{\Phi(z)}{\Psi(z)},$$
i.e. $z=(\varphi,\psi)$ is an eigenfunction associated to $\lambda_1$.\\
By Theorem \ref{simplicityINTRODUZIONE} and \eqref{InsiemeE1riscrittoINTRODUZIONE} we conclude that only one of the following possibilities occurs:
$$ \displaystyle (\varphi,\psi)= (\varphi_1,\psi_1), \; \; (\varphi,\psi)= (-\varphi_1,-\psi_1), \; \; (\varphi,\psi)= (-\varphi_1,\psi_1), \; \; (\varphi,\psi)= (\varphi_1,-\psi_1).$$

\smallskip
\noindent
Let us assume $(\varphi,\psi)= (\varphi_1,\psi_1)$.\\
By Proposition \ref{funzionicambianosegno} we know that both $\varphi_n$ and $\psi_n$ change sign for any $n \geq 2$, hence both $\varphi_n^-:= -\min\{ \varphi_n, 0    \}$ and  $\psi_n^-:= -\min\{ \psi_n, 0    \}$ are different from zero.\\
By testing \eqref{zneigenfunction} in $(u,v)= \left(  \frac{\alpha + 1}{p} \varphi_n^-, \frac{\beta + 1}{q} \psi_n^- \right)$, we get
\begin{align*}
 \displaystyle \frac{\alpha+1}{p} \int_{\Omega} \left| \nabla \varphi_n^- \right|^p \, dx + \frac{\beta+1}{q} \int_{\Omega} \left| \nabla \psi_n^- \right|^q \, dx 
& =  \, \mu_n \int_{\Omega}  \left|\varphi_n^-\right|^{\alpha+1} \left| \psi_n^-\right|^{\beta+1} \, dx,\\
& =  \, \mu_n \int_{\Omega_n^-}  \left|\varphi_n^-\right|^{\alpha+1} \left| \psi_n^-\right|^{\beta+1} \, dx
\end{align*}
where $\Omega_n^- := \{ x \in \Omega \; : \; \varphi_n(x) < 0 \; \text{ or }  \;  \psi_n(x) < 0 \}$.\\
Applying Young and  H\"older inequalities we obtain
\begin{align*}
\displaystyle \int_{\Omega_n^-}  \left|\varphi_n^-\right|^{\alpha+1} \left| \psi_n^-\right|^{\beta+1} \, dx
& \leq   \left[ \frac{\alpha+1}{p} \int_{\Omega_n^-} \left|\varphi_n^-\right|^{p} \, dx   + \frac{\beta+1}{q} \int_{\Omega_n^-} \left| \psi_n^-\right|^{q} \, dx     \right] \\
& \leq   \left[ \frac{\alpha+1}{p} \left| \Omega_n^- \right|^{\frac{p}{N}} \lVert \varphi_n^-  \rVert_{p^*}^p  +  \frac{\beta+1}{q} \left| \Omega_n^- \right|^{\frac{q}{N}} \lVert \psi_n^-  \rVert_{q^*}^q \right].
\end{align*}
For $p^*=Np/(N-p)$ and $q^*=Nq/(N-q)$  we use now the Sobolev embeddings $\lVert u \rVert_{p^*} \leq \mathcal{S}(p,N) \lVert \nabla u \rVert_{p}$ and $\lVert v \rVert_{q^*} \leq \mathcal{S}(q,N) \lVert \nabla v \rVert_{q}$ to get from the above identity that 
\begin{align*}
\displaystyle
&\frac{\alpha+1}{p}  \int_{\Omega} \left| \nabla \varphi_n^- \right|^p \, dx + \frac{\beta+1}{q} \int_{\Omega} \left| \nabla \psi_n^- \right|^q \, dx  \\
 & \leq 
 \mu_n C \max \left\lbrace \left| \Omega_n^- \right|^{\frac{p}{N}}, \left| \Omega_n^- \right|^{\frac{q}{N}}   \right\rbrace \left[ \frac{\alpha+1}{p} \int_{\Omega} \left| \nabla \varphi_n^- \right|^p \, dx + \frac{\beta+1}{q} \int_{\Omega} \left| \nabla \psi_n^- \right|^q \, dx   \right],
\end{align*}
where $ \displaystyle C:= \max \left\lbrace \mathcal{S}(p,N)^p, \mathcal{S}(q,N)^q \right\rbrace.$ This implies that 
$$ \displaystyle \frac{1}{\mu_n C} \leq \max \left\lbrace \left| \Omega_n^- \right|^{\frac{p}{N}}, \left| \Omega_n^- \right|^{\frac{q}{N}}   \right\rbrace ,$$
whence
\begin{equation*}
 \displaystyle 0 < \frac{1}{C \lambda_1} \leq \liminf_{n \to \infty}  \, \max \left\lbrace \left| \Omega_n^- \right|^{\frac{p}{N}}, \left| \Omega_n^- \right|^{\frac{q}{N}}   \right\rbrace.
\end{equation*}
Since, up to subsequence,  $\varphi_n(x) \to \varphi_1(x)$, $\nabla \varphi_n(x) \to \nabla \varphi_1(x)$, $\psi_n(x) \to \psi_1(x)$, and $\nabla \psi_n(x) \to \nabla \psi_1(x)$ a.e. in $\Omega$, we can apply Severini-Egorov theorem to say that in the complement of a set of arbitrarily small measure we have $\varphi_n$ converges uniformly to $\varphi_1$, $ \nabla \varphi_n$ converges uniformly to $ \nabla \varphi_1$, $\psi_n$ converges uniformly to $\psi_1$ and $\nabla \psi_n$ converges uniformly to $\nabla \psi_1$. Since  $\varphi_1>,$  $\psi_1>0$ in $\Omega$ and $\frac{\partial \varphi_1}{\partial \nu},\frac{\partial \varphi_1}{\partial \nu}<0$ on $\partial \Omega$, we conclude that there exists a subset of $\Omega$ of arbitrarily small measure such that in its complement $\varphi_n$ and $\psi_n$ are positive  for $n$ sufficiently large, that is a contradiction with the last inequality.

Let us recall that in the previous argument we have assumed $(\varphi,\psi)= (\varphi_1,\psi_1)$.\\
In the rest of cases we can argue similarly. For instance, if for example $(\varphi,\psi)= (\varphi_1,-\psi_1)$, then we arrive to a contradiction by considering 
$\varphi_n^-:= -\min\{ \varphi_n, 0    \}$,  $\psi_n^+:= \max\{ \psi_n, 0    \}$, and 
\begin{align*}
\displaystyle
\Omega_n^{-,+} := \{ x \in \Omega \; : \; \varphi_n(x) < 0 \; \text{ or }  \;  \psi_n(x) > 0  \}.
\end{align*}
 
\qed

\section{A SEQUENCE OF EIGENVALUES}

\bigskip
\noindent
Let us introduce the $C^1$ functional $ Q :  X \setminus \{(0,0)\} \to \mathbb{R}$ defined by 
\begin{equation}\label{RQ}
 \displaystyle Q(z):=\frac{ \Psi (z)}{ \Phi(z) },
\end{equation}
where $\Phi$ and $\Psi$ are defined in \eqref{funzionalePhi} and \eqref{funzionalePsi} respectively.\\
Observe that
\begin{equation}\label{derivataQ}
 \displaystyle Q'(z) = \frac{1}{\Phi(z)} \left[ \, \Psi' (z) - Q(z) \, \Phi' (z)    \,  \right] \qquad \forall z=(u,v) \in X \setminus \{(0,0) \}.
\end{equation}
We note that a real number $\lambda \neq 0$ is an eigenvalue of \eqref{EigenDeThélin0} if and only if there exists $\tilde{z} \in \tilde{X}$ such that
$$ \displaystyle \Phi'(\tilde{z})= \lambda \Psi'(\tilde{z}),$$
or equivalently, if $1/\lambda$ is a critical value for $Q$.\\
Let us denote with $\mathcal{M}$  the set
\begin{align}\label{nuovomanifold}
\displaystyle 
\mathcal{M}:=\Biggl\{ z=(u,v) \in X \, : \,  \Phi(z)=1   \Biggr\}.
\end{align}
$\mathcal{M}$  is a $C^1$ manifold of codimension $1$ in $X$ (see, for instance \cite[Example 27.2]{DEM}) and for any $z \in \mathcal{M}$ the tangent space $T_z\mathcal{M}$ of $\mathcal{M}$ at $z$ can be identified with
$$ \displaystyle \text{Ker }( \Phi'(z) )= \{ h \in X \, : \, \langle \Phi'(z),h \, \rangle=0    \}.$$
Observe that by definition of Q and \eqref{derivataQ} the restriction $Q_{\mathcal{M}}:= Q\big|_{ \mathcal{M}}$ of $Q$ to $\mathcal{M}$ satisfies 
$$ \displaystyle   Q_{ \mathcal{M}}(z)=\Psi(z) \quad \text{ and } \quad  Q'(z) =  \Psi' (z) - \Psi(z) \, \Phi' (z) \qquad \forall z=(u,v) \in \mathcal{M}.$$
Moreover, (see \cite[Example 27.3]{DEM}), the derivative $Q_{\mathcal{M}}'$ of $Q_{\mathcal{M}}$ is defined on the tangent bundle $T\mathcal{M}$, and in particular for any $z \in \mathcal{M}$ the map $Q_{\mathcal{M}}'(z)$, defined on $T_z \mathcal{M}$, can be identified  as
\begin{equation}\label{derivataQsuM}
\displaystyle Q_{ \mathcal{M}}'(z)= [ Q' (z) -  \langle \, Q'(z) , e(z)   \, \rangle \, \Phi'(z)]\big|_{T_z \mathcal{M}}, \qquad \forall \,    z=(u,v) \in \mathcal{M},    
\end{equation}
where $e: \mathcal{M} \to X$ is given by 
$$ \displaystyle e(z)= \left( \frac{u}{p} , \frac{v}{q} \right) \qquad \forall \, z=(u,v) \in \mathcal{M}.$$
Notice that
$$ \displaystyle \langle \, \Psi'(z) , e(z)   \, \rangle = \Psi(z) \qquad \text{and} \qquad \langle \, \Phi'(z) , e(z)   \, \rangle = \Phi(z)=1 \qquad \forall z=(u,v) \in \mathcal{M},$$
hence by \eqref{derivataQsuM} we deduce
\begin{equation}\label{derivataQsuM3}
\displaystyle Q_{ \mathcal{M}}'(z)=  [\Psi' (z) -  \Psi(z) \, \Phi'(z)]\big|_{T_z \mathcal{M}} =Q'(z)\big|_{T_z \mathcal{M}} \qquad \forall z=(u,v) \in \mathcal{M}.
\end{equation}

We recall that a real number $c$ is a critical value of $Q_{ \mathcal{M}}$ if there exists $z \in \mathcal{M}$ such that $Q_{ \mathcal{M}}(z)=c$ and $ Q_{ \mathcal{M}}'(z) \equiv 0$. It follows from standard arguments that critical values of $Q_{ \mathcal{M}}$ correspond to critical values of $Q$. In particular,  we deduce that $\lambda \neq 0$ is an eigenvalue of \eqref{EigenDeThélin0} if and only if $1/\lambda$ is a critical value for $Q_{ \mathcal{M}}$.

Given $z \in \mathcal{M}$, we have $Q_{ \mathcal{M}}'(z) \in T^*_z \mathcal{M},$ where $ T^*_z \mathcal{M}$ denotes the dual space of $ T_z \mathcal{M}$ endowed with the norm $ \lVert \cdot \rVert_z^*$. Denoting with $\lVert \cdot \rVert^*$ the norm of $X^*$, by \cite[Proposition 3.54]{PAO} it follows that
\begin{align}\label{normaspaziotangente}
\displaystyle   \lVert Q_{ \mathcal{M}}'(z) \rVert^*_z= \min_{\mu \in \mathbb{R}} \lVert \,  Q'(z)  - \mu \Phi'(z) \, \rVert^*=\min_{\mu \in \mathbb{R}} \lVert \,  \Psi'(z)  -( \Psi(z) +  \mu) \Phi'(z) \, \rVert^* .
\end{align}

As it has been mentioned in the introduction,  the construction of  a sequence of eigenvalues of \eqref{EigenDeThélin0} uses a deformation lemma for $C^1$ submanifolds of a Banach space (see Theorem~\ref{BON}) based in \cite[Theorem 2.5]{BON} that requires a coupled Palais-Smale condition.

Firstly we observe that $0$ is the only critical value of $\Phi$, therefore for any $\varepsilon >0$ sufficiently small, the manifold
$$ \mathcal{M}_{\varepsilon}:= \Biggl\{ z =(u,v) \in X \, : \, \Phi(z)=1+ \varepsilon \Biggr\} $$
is well defined, hence definitions and results proved for $Q$ on $\mathcal{M}$ still hold on $\mathcal{M}_{\varepsilon}$.

\begin{lemma}\label{QPhiPSaccoppiata}
For every $c>0$ the functionals $ \displaystyle Q$ and $\Phi$ satisfy the following coupled Palais-Smale condition at level $c$ on $\mathcal{M}$ (denoted by $\widehat{(PS)}_{\mathcal{M},c}$), that is  any sequence $\{z_n\}_n = \{(u_n,v_n)\}_n \subset X$ such that
\begin{itemize}
\item[$i)$] $z_n \in \mathcal{M}_{\varepsilon_n},$ where $\varepsilon_n >0$ and $\varepsilon_n \to 0$ as $n \to \infty$;
\item[$ii)$] $Q(z_n) \to c$ as $n \to \infty$;
\item[$iii)$] $\lVert Q_{\mathcal{M}_{\varepsilon_n}}'(z_n) \rVert_{z_n}^* \to 0 \text{ or } \lVert \Phi'(z_n) \rVert \to 0$ as $n \to \infty$;
\end{itemize}
has a converging subsequence in $X$.
\end{lemma}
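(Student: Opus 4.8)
The plan is to start from a sequence $\{z_n\} = \{(u_n,v_n)\}$ satisfying $i)$, $ii)$, $iii)$ and first show it is bounded in $X$. Boundedness is immediate from $i)$: since $\Phi(z_n) = 1+\varepsilon_n$ and $\varepsilon_n \to 0$, the quantity $\frac{\alpha+1}{p}\|u_n\|_{1,p}^p + \frac{\beta+1}{q}\|v_n\|_{1,q}^q$ is bounded, hence $\{u_n\}$ is bounded in $W_0^{1,p}(\Omega)$ and $\{v_n\}$ is bounded in $W_0^{1,q}(\Omega)$. By reflexivity and Rellich–Kondrachov we may extract a subsequence (not relabeled) with $u_n \rightharpoonup u$ in $W_0^{1,p}(\Omega)$, $v_n \rightharpoonup v$ in $W_0^{1,q}(\Omega)$, $u_n \to u$ in $L^p(\Omega)$ and a.e., $v_n \to v$ in $L^q(\Omega)$ and a.e., and with $|u_n| \le \bar u \in L^p(\Omega)$, $|v_n| \le \bar v \in L^q(\Omega)$.

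The next step is to pass to the limit in the Euler–Lagrange information coming from $iii)$. By \eqref{normaspaziotangente}, $\|Q_{\mathcal{M}_{\varepsilon_n}}'(z_n)\|_{z_n}^* \to 0$ means there exist $\mu_n \in \mathbb{R}$ with $\Psi'(z_n) - (\Psi(z_n)+\mu_n)\Phi'(z_n) \to 0$ in $X^*$; and since $Q(z_n)\to c>0$ together with $\Phi(z_n)\to 1$ forces $\Psi(z_n)\to c$, one sees (testing against $e(z_n)$, using $\langle\Phi'(z_n),e(z_n)\rangle = \Phi(z_n)\to 1$ and $\langle\Psi'(z_n),e(z_n)\rangle = \Psi(z_n)\to c$) that $\mu_n\to 0$. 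Hence in both alternatives of $iii)$ we end up with
\[
\Phi'(z_n) = \frac{1}{\sigma_n}\Psi'(z_n) + o(1)\quad\text{in }X^*,
\]
for a bounded sequence of coefficients $\sigma_n$ bounded away from $0$ (in the alternative $\|\Phi'(z_n)\|\to 0$ one instead gets $\Phi'(z_n)\to 0$ and $\Psi'(z_n)\to 0$ directly, which is handled the same way below with the right-hand side replaced by $o(1)$). Now test this identity against $(u_n - u, 0)$. The right-hand side integrals $\int_\Omega |u_n|^{\alpha-1}|v_n|^{\beta+1}u_n(u_n-u)\,dx$ tend to $0$ by dominated convergence, exactly as in the proof of Theorem \ref{lambda1isolatoINTRODUZIONE}, using the Young-inequality domination $|u_n|^\alpha|v_n|^{\beta+1}(|u_n|+|u|) \le 2\big(\frac{\alpha+1}{p}\bar u^p + \frac{\beta+1}{q}\bar v^q\big) \in L^1(\Omega)$. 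Therefore
\[
(\alpha+1)\int_\Omega |\nabla u_n|^{p-2}\nabla u_n\cdot \nabla(u_n-u)\,dx \to 0,
\]
and likewise $(\beta+1)\int_\Omega |\nabla v_n|^{q-2}\nabla v_n\cdot\nabla(v_n-v)\,dx \to 0$ by testing against $(0, v_n-v)$.

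Finally I would upgrade weak to strong convergence. From $\int_\Omega |\nabla u_n|^{p-2}\nabla u_n\cdot\nabla(u_n-u)\,dx \to 0$, the weak lower semicontinuity and convexity argument used verbatim in the proof of Theorem \ref{lambda1isolatoINTRODUZIONE} (the inequality $\|u_n\|_{1,p}^p \le \|u\|_{1,p}^p + p\int_\Omega |\nabla u_n|^{p-2}\nabla u_n\cdot\nabla(u_n-u)\,dx$) gives $\|u_n\|_{1,p}\to\|u\|_{1,p}$; combined with $u_n\rightharpoonup u$ and the uniform convexity of $W_0^{1,p}(\Omega)$, this yields $u_n\to u$ strongly in $W_0^{1,p}(\Omega)$, and similarly $v_n\to v$ strongly in $W_0^{1,q}(\Omega)$. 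Hence $z_n\to z=(u,v)$ strongly in $X$, which is the assertion. The main obstacle is the bookkeeping in the second step: making sure the Lagrange multipliers $\mu_n$ (and the normalizing coefficients) stay under control so that the limiting PDE identity, and in particular the crucial test-function estimate, survives the passage $\varepsilon_n\to 0$; once that is in place, the compactness follows from the standard $(S_+)$-type argument for the $p$- and $q$-Laplacians, which is why I reuse the computation already carried out in Section 3.
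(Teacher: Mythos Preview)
Your proposal is correct and follows essentially the same route as the paper's proof: boundedness from $i)$, extraction of a weak/strong subsequence, use of \eqref{normaspaziotangente} with the test against $e(z_n)$ to force $\mu_n\to 0$, then testing against $(u_n-u,0)$ and $(0,v_n-v)$ and finishing via the convexity/uniform-convexity argument. The only slip is the parenthetical claim that ``$\Psi'(z_n)\to 0$'' in the alternative $\|\Phi'(z_n)\|\to 0$, which is neither asserted in the paper nor needed---you only use $\Phi'(z_n)\to 0$ there, so the argument stands.
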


\begin{proof} Let $\{z_n\}_n = \{(u_n,v_n)\}_n \subset X$ a sequence satisfying $i)-iii)$.\\
By condition $i)$ we deduce that 
\begin{align*}
\displaystyle \lVert u_n \rVert_{1,p}^p \leq \frac{p}{\alpha+1} \Phi(z_n) = \frac{p(1 + \varepsilon_n)}{\alpha+1} \quad \text{and} \quad \lVert v_n \rVert_{1,q}^q \leq \frac{q}{\beta+1} \Phi(z_n) = \frac{q(1 + \varepsilon_n)}{\beta+1},
\end{align*}
that is $ \lbrace{ z_n \rbrace}_n $ is bounded in $X$, and so there exists a subsequence, still denoted by $\lbrace{z_n\rbrace}_n,$ that converges to some $z=(u,v)$ weakly in $X$ and strongly in $L^p(\Omega)\times L^q(\Omega)$.
By condition $iii)$ we have two possibilities.\\
Firstly we assume that $\lVert \Phi'(z_n) \rVert \to 0$ as $n \to \infty$ holds true. In particular, we have
$$ \langle \Phi'(z_n) , u_n - u \rangle \to 0,$$
that is
$$ \displaystyle \into \left| \nabla u_n \right|^{p-2} \nabla u_n \cdot \nabla (u_n - u) \, dx \to 0.$$
Now, the weak lower semicontinuity and the convexity of  $\| \cdot \|^p_{1,p}$
imply
\begin{align*}
\|u\|_{1,p}^p & \leq \liminf_{n \to \infty} \|u_n\|_{1,p}^p
\leq \limsup_{n \to \infty} \|u_n\|_{1,p}^p = \limsup_{n \to \infty} \into |\nabla u_n|^p \, dx\\
& \leq \limsup_{n \to \infty} \left[ \into |\nabla u|^p \, dx + p \into |\nabla u_n|^{p-2} \nabla u_n \cdot \nabla  \left(u_n-u\right) \, dx \right] = \|u\|_{1,p}^p,
\end{align*}
i.e. $\|u_n\|_{1,p}^p \to \|u\|_{1,p}^p$, and  by uniform convexity of $W_0^{1,p}(\Omega)$, we deduce $u_n\rightarrow u$ in $\sob$. Analogously,  we get that $v_n\rightarrow v$ strongly in $\sobq$.

Now, let us assume the other possibility, i.e.  $\lVert Q_{\mathcal{M}_{\varepsilon_n}}'(z_n) \rVert_{z_n}^* \to 0 $ as $n \to \infty$.
Strong convergences of $u_n$ to $u$ in $L^{p}(\Omega)$ and of $v_n$ to $ v$ in $L^{q}(\Omega)$ give $ \displaystyle \Psi(z_n) \to \Psi(z).$
Moreover, by $i)$ we infer that $\Phi(z_n) \to 1$. By using also $ii)$, we get
$$ \displaystyle \Psi(z_n) \to \Psi(z)=c.$$
By \eqref{normaspaziotangente} applied to $z_n \in \mathcal{M}_{\varepsilon_n}$ and the convergence to zero of $\lVert Q_{\mathcal{M}_{\varepsilon_n}}'(z_n) \rVert_{z_n}^*$, there exists a sequence $\{\mu_n\}_n \subset \mathbb{R}$ such that
$$ \displaystyle \Psi'(z_n)  -( \Psi(z_n) +  \mu_n) \Phi'(z_n) \to 0 \quad \text{in } X^* \qquad \text{as } n \to \infty.$$
Testing on the vector $(u_n/p,v_n/q)$, we get 
$$ \displaystyle \Psi(z_n)  -( \Psi(z_n) +  \mu_n) \Phi(z_n) \to 0 \qquad \text{as } n \to \infty,$$
and using that $\Phi(z_n) \to 1$ and $\Psi(z_n) \to c$, we deduce $\mu_n \to 0$ as $n \to \infty$.
Testing now on the vector $(u_n - u,0),$ we get
$$ \displaystyle \into |u_n|^{\alpha-1}  |v_n|^{\beta +1 } u_n (u_n -u) \, dx - ( \Psi(z_n) +  \mu_n)\into \left| \nabla u_n \right|^{p-2} \nabla u_n \cdot \nabla (u_n - u) \, dx \to 0,  $$
and considering the strong convergences of $u_n$ to $u$ in $L^p(\Omega)$ and $v_n$ to $v$ in $L^q(\Omega)$ we deduce
$$ \displaystyle ( \Psi(z_n) +  \mu_n)\into \left| \nabla u_n \right|^{p-2} \nabla u_n \cdot \nabla (u_n - u) \, dx \to 0.   $$
Since $\Psi(z_n) +  \mu_n\to c$ with $c \neq 0$, we get
\begin{equation*}
\displaystyle 
\into \left| \nabla u_n \right|^{p-2} \nabla u_n \cdot \nabla (u_n - u) \, dx \to 0. 
\end{equation*}
Arguing as before we conclude that $u_n \to u$ strongly in $W_0^{1,p}(\Omega)$. Similarly we deduce that $v_n \to v$ strongly in $W_0^{1,q}(\Omega)$.
\end{proof}
\noindent
We will apply the following deformation result for $C^1$ submanifolds of a Banach space, which is a consequence of \cite[Theorem 2.5]{BON}.

\begin{theorem}\label{BON}
Let $X$ be a Banach space and $\Phi \in C^1(X, \mathbb{R})$. Assume that $1$ is a regular value of $\Phi$. If $Q$ is a $C^1$ functional on a neighborhood of $\mathcal{M}= \{ x \in X \, : \, \Phi(x)=1 \}$ and $Q$ and $\Phi$ satisfy $\widehat{(PS)}_{\mathcal{M},c}$ for  a regular value $c \in \mathbb{R}$ of $Q_\mathcal{M}$, then
there exists $\hat{\varepsilon} >0$ such that for all $0 < \varepsilon < \hat{\varepsilon}$ there is an homeomorphism $\eta$ of $\mathcal{M}$ onto $\mathcal{M}$ such that:
\begin{enumerate}
\item $ \eta (z)=z$  if $Q(z) \not \in [ c - \hat{\varepsilon}, c + \hat{\varepsilon}]$;
\item $Q(\eta(z)) \geq Q(z)$ for all $z \in \mathcal{M}$;
\item $Q(\eta(z)) \geq c + \varepsilon$ for all $z \in \mathcal{M}$ such that $Q(z) \geq c - \varepsilon$;
\item If $\mathcal{M}$ is symmetric $(\mathcal{M}=-\mathcal{M})$ and if $Q$ is even, then $\eta$ is odd.
\end{enumerate}
\end{theorem}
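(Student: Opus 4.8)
This theorem is stated as a consequence of \cite[Theorem 2.5]{BON}, so the plan is to verify the hypotheses of that abstract result and then transcribe its conclusion, paying attention only to the direction in which $Q$ is made to vary and to the equivariance clause.

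\emph{Step 1: a quantitative form of the coupled Palais--Smale condition.} The first task is to upgrade the assumptions ``$c$ is a regular value of $Q_\mathcal{M}$'' and ``$1$ is a regular value of $\Phi$'', together with $\widehat{(PS)}_{\mathcal{M},c}$, into the uniform statement: there exist $\bar\varepsilon>0$ and $\delta>0$ such that
$$\|Q_{\mathcal{M}_{\varepsilon'}}'(z)\|_z^*\ \geq\ \delta \qquad\text{and}\qquad \|\Phi'(z)\|\ \geq\ \delta$$
for every $z\in\mathcal{M}_{\varepsilon'}$ with $0\leq\varepsilon'<\bar\varepsilon$ and $|Q(z)-c|<\bar\varepsilon$ (with the convention $\mathcal{M}_0:=\mathcal{M}$). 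I would prove this by contradiction: a sequence violating it is, after extracting a subsequence, a sequence $\{z_n\}$ with $z_n\in\mathcal{M}_{\varepsilon_n}$, $\varepsilon_n\to0$, $Q(z_n)\to c$, and either $\|Q_{\mathcal{M}_{\varepsilon_n}}'(z_n)\|_{z_n}^*\to0$ or $\|\Phi'(z_n)\|\to0$; by the coupled Palais--Smale condition $\widehat{(PS)}_{\mathcal{M},c}$ it then has a subsequence converging to some $z$. Since $\Phi(z)=\lim(1+\varepsilon_n)=1$, we get $z\in\mathcal{M}$ and $Q(z)=c$, while continuity of $\Phi'$ and of $Q'$ together with \eqref{normaspaziotangente} forces either $\Phi'(z)=0$, against ``$1$ a regular value of $\Phi$'', or $\|Q_\mathcal{M}'(z)\|_z^*=0$, against ``$c$ a regular value of $Q_\mathcal{M}$''.

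\emph{Step 2: the deformation.} With Step~1 in hand, the construction of $\eta$ is the one carried out in \cite[Theorem 2.5]{BON}. Since clauses (2)--(3) ask $Q$ to be \emph{non-decreasing} along the deformation, I would apply the cited result to $-Q$: note that $-c$ is a regular value of $(-Q)_\mathcal{M}$, that $(-Q)'=-Q'$ and hence $\|(-Q)_{\mathcal{M}_{\varepsilon'}}'(z)\|_z^*=\|Q_{\mathcal{M}_{\varepsilon'}}'(z)\|_z^*$, so that the coupled Palais--Smale condition holds verbatim for $-Q$ at level $-c$. Concretely, one builds a locally Lipschitz pseudo-gradient field $V$ for $Q$ on a neighbourhood of the strip $\{\,|Q-c|\leq\bar\varepsilon\,\}$, made tangent to the level manifolds $\mathcal{M}_{\varepsilon'}$ by subtracting a suitable multiple of the gradient direction of $\Phi$ (here the second bound $\|\Phi'\|\geq\delta$ is used), multiplies it by a Lipschitz cutoff $\chi$ vanishing outside $\{\,|Q-c|<\bar\varepsilon\,\}$, and integrates the flow $\dot\sigma=\chi(\sigma)\,V(\sigma)$ on $\mathcal{M}$. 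The flow stays on $\mathcal{M}$ by tangency, $t\mapsto Q(\sigma(t,z))$ is non-decreasing, and on the support of $\chi$ the quantity $\langle Q'(\sigma),V(\sigma)\rangle$ is bounded below by a positive constant depending on $\delta$, so a time $T=T(\varepsilon)$ can be chosen with $\sigma(T,\cdot)$ mapping $\{Q\geq c-\varepsilon\}$ into $\{Q\geq c+\varepsilon\}$. Setting $\eta(z):=\sigma(T,z)$, property (1) follows from the support of $\chi$, (2) from monotonicity, and (3) from the choice of $T$; one takes $\hat\varepsilon$ to be this $\bar\varepsilon$, shrunk if needed.

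\emph{Step 3: equivariance and the main difficulty.} For (4), if $\mathcal{M}=-\mathcal{M}$ and $Q$ is even then $Q'$ is odd, so the pseudo-gradient field can be symmetrized (replace $V(z)$ by $\tfrac12(V(z)-V(-z))$) to be odd, and since $\chi$ is a function of $Q$ it is even; hence the flow satisfies $\sigma(t,-z)=-\sigma(t,z)$ and $\eta$ is odd. The genuine obstacle here is not the flow argument but the fact that, when $1<p<2$ or $1<q<2$, $\mathcal{M}$ is only a $C^1$ (not $C^{1,1}$) submanifold of $X$, so the classical deformation lemmas on Finsler manifolds are unavailable; the purpose of \cite[Theorem 2.5]{BON} is exactly to circumvent this by deforming within the ambient space $X$ along the approximating family $\mathcal{M}_\varepsilon$, which is precisely why the hypothesis is the \emph{coupled} condition $\widehat{(PS)}_{\mathcal{M},c}$ rather than the usual Palais--Smale condition for $Q_\mathcal{M}$ alone. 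Once Step~1 is established, the remainder is a transcription of \cite[Theorem 2.5]{BON}.
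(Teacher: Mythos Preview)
Your proposal is correct and aligned with the paper's treatment: the paper does not prove Theorem~\ref{BON} at all but simply states it as ``a consequence of \cite[Theorem 2.5]{BON}'', so your plan of verifying the hypotheses of that abstract result (via the quantitative Palais--Smale bound in Step~1) and then reading off the conclusion is exactly what is intended. Your sketch in fact supplies more detail than the paper does, including the passage from $Q$ to $-Q$ to obtain the upward deformation and the symmetrization for the odd case.
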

Now, we will construct a sequence of eigenvalues for \eqref{EigenDeThélin0}.
\begin{proposition}\label{prop 2.9}
Let $k \in \mathbb{N}$ and let $S^{k-1}$ be the unit sphere in $\mathbb{R}^k$. Set 
$$
\mathcal{M}_k := \{ A=\alpha(S^{k-1}) \subset \mathcal{M}  \ | \,  \alpha: S^{k-1} \to \mathcal{M} \text{ is odd and continuous}  \}.
$$
If
\begin{equation}\label{ck}
	c_k := \sup_{A \in \mathcal{M}_k} \min_{(u,v) \in A} Q_{ \mathcal{M}}(u,v),
\end{equation}
then
\begin{equation}\label{lambdakSfera}
	\lambda_k := \frac{1}{c_k}
\end{equation}
is an eigenvalue of problem \eqref{EigenDeThélin0}.
\end{proposition}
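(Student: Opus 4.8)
The plan is to realize each $\lambda_k = 1/c_k$ as a critical value of the constrained functional $Q_{\mathcal M}$, using the deformation Theorem~\ref{BON} together with the coupled Palais--Smale condition established in Lemma~\ref{QPhiPSaccoppiata}. First I would check that the minimax values $c_k$ are well defined and finite: the class $\mathcal M_k$ is nonempty (take any odd continuous $\alpha\colon S^{k-1}\to\mathcal M$, e.g. obtained by normalizing an odd linear embedding of $\mathbb R^k$ into $X$ onto $\mathcal M$), and for $A=\alpha(S^{k-1})$ the number $\min_{A}Q_{\mathcal M}=\min_{A}\Psi$ is attained by compactness of $A$ and continuity of $\Psi$; it is nonnegative, and strictly positive because $\alpha(S^{k-1})$ is compact in $\mathcal M\subset\widetilde X$ so $\Psi$ stays bounded away from $0$ on it. Finiteness of the supremum follows from the inequality $\Psi(z)\le \Phi(z)/\lambda_1 = 1/\lambda_1$ valid on $\mathcal M$ (from the characterization $\lambda_1=\inf_{\widetilde X}\Phi/\Psi$), so $0<c_k\le 1/\lambda_1$, hence $\lambda_k\ge\lambda_1>0$ is a genuine positive real number. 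I would also note the monotonicity $c_{k+1}\le c_k$ since $\mathcal M_{k+1}\supset$ (images of) $\mathcal M_k$ by restriction to an equatorial $S^{k-1}\subset S^k$, which is not strictly needed here but clarifies the picture.

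The core step is the standard deformation argument: suppose $c:=c_k$ were a regular value of $Q_{\mathcal M}$. Apply Theorem~\ref{BON} (its hypotheses hold: $1$ is a regular value of $\Phi$ since $0$ is the only critical value of $\Phi$; $Q$ is $C^1$ near $\mathcal M$; and $\widehat{(PS)}_{\mathcal M,c}$ holds by Lemma~\ref{QPhiPSaccoppiata} since $c>0$). This yields $\hat\varepsilon>0$ and, for a fixed $0<\varepsilon<\hat\varepsilon$, an odd homeomorphism $\eta\colon\mathcal M\to\mathcal M$ (oddness from part~(4), using that $\mathcal M=-\mathcal M$ and $\Psi$ is even) with $Q(\eta(z))\ge Q(z)$ everywhere and $Q(\eta(z))\ge c+\varepsilon$ whenever $Q(z)\ge c-\varepsilon$. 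By definition of $c_k$ as a supremum, pick $A=\alpha(S^{k-1})\in\mathcal M_k$ with $\min_A Q_{\mathcal M}>c-\varepsilon$. Then $\eta\circ\alpha\colon S^{k-1}\to\mathcal M$ is odd and continuous, so $\eta(A)=(\eta\circ\alpha)(S^{k-1})\in\mathcal M_k$, and $\min_{\eta(A)}Q_{\mathcal M}=\min_{w\in A}Q(\eta(w))\ge c+\varepsilon$, contradicting $c_k=\sup_{\mathcal M_k}\min Q_{\mathcal M}=c$. Hence $c_k$ is a critical value of $Q_{\mathcal M}$.

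Finally I would translate the critical point back into an eigenfunction: if $z\in\mathcal M$ satisfies $Q_{\mathcal M}(z)=c_k$ and $Q_{\mathcal M}'(z)\equiv 0$, then by \eqref{derivataQsuM3} and \eqref{normaspaziotangente} there exists $\mu\in\mathbb R$ with $\Psi'(z)=(\Psi(z)+\mu)\Phi'(z)$ in $X^*$. Testing against $e(z)=(u/p,v/q)$ and using $\langle\Psi'(z),e(z)\rangle=\Psi(z)=c_k$, $\langle\Phi'(z),e(z)\rangle=\Phi(z)=1$ gives $c_k=c_k+\mu$, so $\mu=0$ and $\Phi'(z)=\frac{1}{c_k}\Psi'(z)=\lambda_k\Psi'(z)$; since $c_k>0$, $z\in\widetilde X$ so $z\ne(0,0)$ and, by the characterization in Section~4 (a nonzero $\lambda$ is an eigenvalue iff there is $\tilde z\in\widetilde X$ with $\Phi'(\tilde z)=\lambda\Psi'(\tilde z)$), $\lambda_k=1/c_k$ is an eigenvalue of \eqref{EigenDeThélin0}. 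The main obstacle is purely the bookkeeping around the non-$C^{1,1}$ manifold $\mathcal M$: one must be careful that Theorem~\ref{BON} only produces a deformation of $\mathcal M$ onto itself (not a flow), and that all minimax comparisons are done with $Q_{\mathcal M}=\Psi|_{\mathcal M}$ rather than with $Q$ on $X$; once the coupled Palais--Smale condition is in hand, the Lusternik--Schnirelmann-type argument above goes through verbatim.
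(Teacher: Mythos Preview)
Your core deformation argument matches the paper's exactly: assume $c_k$ is regular, apply Theorem~\ref{BON} to get an odd homeomorphism $\eta$ of $\mathcal M$, pick $A\in\mathcal M_k$ with $\min_A Q_{\mathcal M}>c_k-\varepsilon$, and conclude $\eta(A)\in\mathcal M_k$ with $\min_{\eta(A)}Q_{\mathcal M}\ge c_k+\varepsilon$, a contradiction. Your explicit translation from a constrained critical point back to $\Phi'(z)=\lambda_k\Psi'(z)$ via testing against $e(z)$ simply unpacks what the paper handles by citing the equivalence established just before the proposition.

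There is one slip in your preliminary discussion: you assert $\mathcal M\subset\widetilde X$ in order to conclude $\min_A\Psi>0$ for \emph{every} $A\in\mathcal M_k$, but this inclusion is false --- any pair $(u,0)$ with $\frac{\alpha+1}{p}\|u\|_{1,p}^p=1$ lies in $\mathcal M$ and has $\Psi(u,0)=0$, so odd maps into $\mathcal M$ can certainly hit the zero set of $\Psi$. What is actually needed (to invoke Lemma~\ref{QPhiPSaccoppiata}, which is stated only for levels $c>0$) is the weaker fact $c_k>0$, and for this it suffices to exhibit \emph{one} $A\in\mathcal M_k$ on which $\Psi$ is bounded away from zero: take $\phi_1,\dots,\phi_k\in C_c^\infty(\Omega)$ with pairwise disjoint supports, set $z(\xi)=\bigl(\sum_i\xi_i\phi_i,\sum_i\xi_i\phi_i\bigr)$ for $\xi\in S^{k-1}$, and project to $\mathcal M$ via the odd map $\pi$; disjointness of supports gives $\Psi(z(\xi))=\sum_i|\xi_i|^{\alpha+\beta+2}\int_\Omega|\phi_i|^{\alpha+\beta+2}>0$. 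The paper does not address this point at all. Your monotonicity aside is also misphrased --- Borsuk--Ulam forbids any odd continuous map $S^k\to S^{k-1}$, so $\mathcal M_k\not\subset\mathcal M_{k+1}$ in general; the correct reason for $c_{k+1}\le c_k$ is that each $A'\in\mathcal M_{k+1}$ \emph{contains} an equatorial image $B\in\mathcal M_k$, whence $\min_{A'}\Psi\le\min_B\Psi\le c_k$ --- but as you say, this is not needed for the proposition.
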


\begin{proof}
As it has been mentioned a real number $\lambda$ is an eigenvalue of \eqref{EigenDeThélin0} if and only if $1/\lambda$ is a critical value of $ Q_{ \mathcal{M}}$. By contradiction assume  that $c_k$  is a regular value of $ Q_{ \mathcal{M}}$. By Theorem \ref{BON} there exists $ \varepsilon >0$ an homeomorphism $\eta$ of $\mathcal{M}$ onto $\mathcal{M}$ satisfying  properties $(1)-(4)$. By definition of $c_k$, we deduce that there exists an odd and continuous map $\alpha: S^{k-1} \to \mathcal{M} $ such that $ \min_{(u,v) \in A} Q_{ \mathcal{M}}(u,v) > c_k - \varepsilon$ for $A= \alpha(S^{k-1})$. Hence, taking into account property $(4)$, we deduce that $\eta(A) \in \mathcal{M}_k$. However, by $(3)$ of the cited Theorem we infer that $ \min_{(u,v) \in \eta(A) } Q_{ \mathcal{M}}(u,v) \geq c_k + \varepsilon$, which contradicts the definition of $c_k$. Therefore $c_k$ is a critical value of $ Q_{ \mathcal{M}}$ and $\lambda_k$ is an eigenvalue of \eqref{EigenDeThélin0}.

\end{proof}

\begin{remark}\label{definizionicoincidono}
We point out that the first eigenvalue $\lambda_1$ defined in \eqref{lambdakSfera} coincides with the eigenvalue $\lambda_1$ defined in \eqref{lambda1isolatoINTRODUZIONE}. In fact, by definition we know that 
$$
\mathcal{M}_1 := \{ A=\{ z,-z\}   \ | \, z\in \mathcal{M}\},
$$
and since $\Psi (-z)=\Psi (z)$ we also have
$$ 
\displaystyle c_1 
= \sup_{A \in \mathcal{M}_1} \min_{ \bar{z} \in A} \Psi(\bar{z})
= \sup_{ z \in \mathcal{M}}  \Psi(z) 
=  \sup_{z \in X \setminus (0,0)} \frac{ \Psi (z)}{ \Phi(z) } 
= \frac{1}{ \displaystyle \inf_{z \in \tilde{X}} \frac{\Phi(z)}{\Psi(z)}}= \frac{1}{\lambda_1}.
$$
\end{remark}

\section{PALAIS-SMALE CONDITION OF $J$}

\medskip
\noindent
In order to study problem \eqref{Syst0}, let us define the functional $J:X \to \mathbb{R}$ as
\begin{align}\label{J}
\displaystyle
J(z)
&:=  \frac{\alpha+1}{p} \into \left| \nabla u \right|^p \, dx + \frac{\beta+1}{q} \into \left| \nabla v \right|^q \, dx    - \lambda_1   \into \left| u \right|^{\alpha+1} \left| v \right|^{\beta+1} \, dx \\
& \quad - \into F(x,u,v) \, dx + \into h_1 u \, dx + \into h_2 v \, dx \nonumber \\
& = \Phi(u,v) - \lambda_1 \Psi(u,v) - \into F(x,u,v) \, dx + \into h_1 u \, dx + \into h_2 v \, dx \nonumber
\end{align}
for any $z=(u,v) \in X$.\\
It turns out that the functional $J$ is of class $C^1$ on $X$ with 
\begin{align*}
& \langle J'(z_0), z \rangle  \\
= & \displaystyle (\alpha+1) \into |\nabla
		u_0|^{p-2}\nabla u_0 \cdot \nabla u \ dx + (\beta+1)\into |\nabla v_0|^{q-2}\nabla v_0 \cdot \nabla v \ dx  \\
 & - \displaystyle \lambda_1 \left[ (\alpha+1)\into | u_0|^{\alpha-1}| v_0|^{\beta+1} u_0 u \ dx + (\beta+1)\into | u_0|^{\alpha+1}| v_0|^{\beta-1} v_0  v \ dx  \right] \\
 & -\displaystyle \into \left[  F_s(x,u_0,v_0) u +  F_t(x,u_0,v_0) v \right] \, dx +  \into h_1 u \, dx + \into h_2 v \, dx. 
\end{align*}
for any $z_0=(u_0,v_0), z=(u, v) \in X$.
\begin{lemma}
Let $F:\Omega \times \mathbb{R}^2 \to \mathbb{R}$ be a $C^1$-Carathéodory function satisfying \textbf{(F1)} and \textbf{(F2)}. Consider $h_1 \in L^{p'}(\Omega)$ and  $h_2 \in L^{q'}(\Omega)$. If we assume that 
\begin{itemize}
\item[•]  in the case $p<q$,  either condition \eqref{LLpminoreq1} or  condition \eqref{LLpminoreq2} holds true;

\item[•]  in the case $p=q$,  either condition \eqref{LLpp1} or  condition \eqref{LLpp2} holds true;

\item[•]  in the case $p>q$,  either condition \eqref{LLpmaggioreq1} or  condition \eqref{LLpmaggioreq2} holds true;
\end{itemize}
then $J$ satisfies the (PS) condition, namely if $ \lbrace{ z_n \rbrace}_n = \lbrace{ (u_n,v_n) \rbrace}_n$ is a sequence in $X$ such that there exists a positive constant $c$ such that  
\begin{equation}\label{PS1}
\left| J(z_n)\right| \leq c  \quad \text{ for any } n \in \mathbb{N}
\end{equation}
and 
\begin{equation}\label{PS2}
\|J'(z_n)\|~\rightarrow~0,
\end{equation}
then $\lbrace{ z_n \rbrace}_n $ has a convergent subsequence in $X$.
\end{lemma}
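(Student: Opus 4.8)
The plan is to follow the standard two-part strategy for quasilinear resonant problems: first show that a Palais--Smale sequence $\{z_n\}_n = \{(u_n,v_n)\}_n$ is bounded in $X$, and then upgrade boundedness to strong convergence using the $(S_+)$ property of the $p$- and $q$-Laplacians. The second part is routine: once $\{z_n\}_n$ is bounded, up to a subsequence $z_n \rightharpoonup z = (u,v)$ weakly in $X$ and strongly in $L^p(\Omega) \times L^q(\Omega)$; testing $\langle J'(z_n), (u_n - u, 0)\rangle \to 0$, the terms involving $\lambda_1 \Psi'$, $F_s$ (bounded by $M$ thanks to \textbf{(F1)}) and $h_1$ all vanish in the limit by dominated convergence and strong $L^p$ convergence, leaving $\int_\Omega |\nabla u_n|^{p-2}\nabla u_n \cdot \nabla(u_n - u)\,dx \to 0$; then the weak lower semicontinuity/convexity argument already used twice in the excerpt (in the proof of Theorem~\ref{lambda1isolatoINTRODUZIONE} and in Lemma~\ref{QPhiPSaccoppiata}) together with the uniform convexity of $W_0^{1,p}(\Omega)$ gives $u_n \to u$ strongly in $W_0^{1,p}(\Omega)$, and similarly $v_n \to v$ in $W_0^{1,q}(\Omega)$.

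The hard part is the boundedness of the Palais--Smale sequence, and this is where the Landesman--Lazer conditions enter. I would argue by contradiction: suppose $\|z_n\| \to +\infty$ and set $w_n := z_n / t_n$ where $t_n$ is chosen so that $\|u_n/ t_n^{1/p}\|_{1,p}^p + \|v_n / t_n^{1/q}\|_{1,q}^q$ is normalized (using the $(p,q)$-homogeneity of $\Phi$ and $\Psi$, the natural rescaling is $\hat u_n = u_n/t_n^{1/p}$, $\hat v_n = v_n/t_n^{1/q}$ with $\Phi(\hat u_n,\hat v_n)=1$). Dividing \eqref{PS1} by $t_n$ and using that $F(x,u,v)$ grows at most linearly (from \textbf{(F1)}, $|F(x,s,t) - F(x,0,0)| \le M(|s|+|t|)$) and that $h_1 \in L^{p'}$, $h_2 \in L^{q'}$, the perturbation terms become negligible after rescaling, so $\Phi(\hat u_n, \hat v_n) - \lambda_1 \Psi(\hat u_n,\hat v_n) \to 0$; since $\Phi(\hat u_n,\hat v_n)=1$ this forces $\Psi(\hat u_n,\hat v_n) \to 1/\lambda_1$. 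Then from $\|J'(z_n)\| \to 0$, after rescaling one shows $(\hat u_n, \hat v_n)$ is a minimizing-type sequence whose weak limit $(\hat u, \hat v)$ satisfies $\Phi(\hat u,\hat v) = \lambda_1 \Psi(\hat u,\hat v)$, hence is an eigenfunction associated to $\lambda_1$; by Theorem~\ref{simplicityINTRODUZIONE} it lies in $E_1$, i.e. up to subsequence $(\hat u_n, \hat v_n) \to (\pm\varphi_1, \pm\psi_1)$ strongly (the strong convergence coming from the same $(S_+)$ argument).

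Once we know the rescaled sequence converges to one of the four sign-configurations of $(\varphi_1,\psi_1)$, we extract the contradiction from the appropriate Landesman--Lazer inequality. Writing $\langle J'(z_n), (\varphi_1, \psi_1)\rangle \to 0$ (or testing against the relevant signed version $(\pm\varphi_1,\pm\psi_1)$), expand using that $(\varphi_1,\psi_1)$ is an eigenfunction of $\lambda_1$: the quasilinear terms $\int |\nabla u_n|^{p-2}\nabla u_n\cdot\nabla\varphi_1$ minus $\lambda_1(\alpha+1)\int|u_n|^{\alpha-1}|v_n|^{\beta+1}u_n\varphi_1$ (and the $v$-analogue), while not exactly cancelling, can be controlled via the convexity/Picone-type estimates so that the dominant surviving balance is $\int_\Omega [F_s(x,u_n,v_n) - h_1]\varphi_1 \, (+ \text{ the } F_t,h_2,\psi_1 \text{ terms when } p=q \text{ or } p>q) \to 0$. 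Since $u_n = t_n^{1/p}\hat u_n \to +\infty$ or $-\infty$ pointwise on the set where $\varphi_1 > 0$ (and correspondingly for $v_n$ via $\psi_1$), depending on which of the four limits occurred, Fatou/dominated convergence forces the limit of $\int_\Omega F_s(x,u_n,v_n)\varphi_1$ to equal one of $\int_\Omega F_s^{\pm\pm}\varphi_1$, contradicting the strict inequality in \eqref{LLpminoreq1}--\eqref{LLpmaggioreq2}. The main obstacle I anticipate is the bookkeeping: isolating exactly which limiting integral $F_s^{\pm\pm}$, $F_t^{\pm\pm}$ appears requires matching the sign pattern of the rescaled limit to the hypothesis, and the three regimes $p<q$, $p=q$, $p>q$ must be handled separately because the relative blow-up rates $t_n^{1/p}$ versus $t_n^{1/q}$ determine whether the $\varphi_1$-component, the $\psi_1$-component, or both dominate the test-function pairing — this is precisely why the three sets of conditions are stated differently.
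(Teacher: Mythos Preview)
Your overall strategy---contradiction via $(p,q)$-rescaling, identification of the rescaled limit with an element of $E_1$ through Theorem~\ref{simplicityINTRODUZIONE}, then derivation of a contradiction from the Landesman--Lazer inequalities, followed by the standard $(S_+)$ upgrade to strong convergence---matches the paper exactly, and the final convergence step is correct as written.

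The gap is in your contradiction step. Testing $\langle J'(z_n),(\varphi_1,\psi_1)\rangle \to 0$ leaves you with the quasilinear block
\[
(\alpha+1)\,r_n^{\frac{p-1}{p}}\Bigl[\int_\Omega |\nabla \bar u_n|^{p-2}\nabla \bar u_n\cdot\nabla\varphi_1 - \lambda_1\int_\Omega |\bar u_n|^{\alpha-1}|\bar v_n|^{\beta+1}\bar u_n\varphi_1\Bigr]
\]
(plus its $q$-analogue). The bracket tends to $\|\varphi_1\|_{1,p}^p-\lambda_1\Psi(\varphi_1,\psi_1)=0$, but $r_n^{(p-1)/p}\to\infty$, so the product is an indeterminate $\infty\cdot 0$; neither convexity nor Picone gives a usable rate or sign here, so you cannot conclude that the surviving balance is exactly $\int(h_1-F_s^{\pm\pm})\varphi_1$. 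The paper avoids this by a homogeneity trick: it combines $J(z_n)$ with $\langle J'(z_n),(u_n/p,\,v_n/q)\rangle$. Because $\langle\Phi'(z),(u/p,v/q)\rangle=\Phi(z)$ and $\langle\Psi'(z),(u/p,v/q)\rangle=\Psi(z)$, the quasilinear and $\lambda_1\Psi$ parts cancel \emph{identically}, and what remains is
\[
r_n^{1/p}\Bigl[(1-\tfrac1p)\!\int h_1\bar u_n-\!\int\!\!\int_0^1 F_s(x,\tau u_n,\tau v_n)\bar u_n\,d\tau+\tfrac1p\!\int F_s(x,u_n,v_n)\bar u_n\Bigr]+r_n^{1/q}\bigl[\cdots\bigr],
\]
which after dominated convergence equals $r_n^{1/p}(1-\tfrac1p)[\int h_1\varphi_1-\int F_s^{\pm\pm}\varphi_1+o(1)]+r_n^{1/q}(1-\tfrac1q)[\cdots]$. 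Dividing by the dominant power $r_n^{1/\min(p,q)}$ and letting $n\to\infty$ then yields the desired equality that contradicts the strict Landesman--Lazer inequality in each of the three regimes. Replace your test against $(\varphi_1,\psi_1)$ by this $J(z_n)-\langle J'(z_n),(u_n/p,v_n/q)\rangle$ combination and the rest of your argument goes through.
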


\begin{proof}
Let $\lbrace{ z_n \rbrace}_n=\lbrace{ (u_n,v_n) \rbrace}_n \subset X$ satisfying \eqref{PS1} and \eqref{PS2}. Let us begin by proving that the sequence $\lbrace{ z_n \rbrace}_n=\lbrace{ (u_n,v_n) \rbrace}_n$  is bounded in $X$. By contradiction, there exists a subsequence, still denoted with $\lbrace{ z_n \rbrace}_n,$  such that $\|(u_n,v_n)\|~\rightarrow~\infty$.
In particular, as $n \to + \infty$ we have
$$r_n:=\|u_n\|_{1,p}^{p}+ \|v_n\|_{1,q}^q \to + \infty.$$
Let us set
$$ \displaystyle \bu_n= \frac{u_n}{r_n^{\frac{1}{p}}} \qquad \text{and} \qquad  \bv_n=\frac{v_n}{r_n^{\frac{1}{q}}},$$
and observe that $\bz_n:=(\bu_n,\bv_n)$ satisfies 
$$ \displaystyle \lVert \bu_n \rVert_{1,p}^p + \lVert \bv_n \rVert_{1,q}^q = \frac{\|u_n\|_{1,p}^{p}+ \|v_n\|_{1,q}^q }{r_n}= 1 \qquad \text{ for any } n \in \mathbb{N}.$$
Hence $ \lVert \bz_n \rVert \leq 2$ for any $n \in \mathbb{N}$ and there exists a subsequence, still denoted by $\lbrace{\bz_n\rbrace}_n,$ that converges to some $\bz=(\bu,\bv)$ weakly in $X$ and strongly in $L^p(\Omega)\times L^q(\Omega)$.\\
In particular, as $n \to \infty$ we have
\begin{align}\label{ps1}
\frac{1}{\ r_n^{\frac{p-1}{p}}}\ \bigl\langle J'(z_n),(\bu_n-\bu,0)\bigr\rangle
& = \displaystyle  (\alpha+1)\into |\nabla \bu_n|^{p-2}\nabla \bu_n \cdot \nabla \left( \bu_n - \bu \right) \ dx \\
& \displaystyle \quad -\lambda_1 (\alpha+1)\into | \bu_n|^{\alpha-1} | \bv_n|^{\beta+1}\bu_n \left( \bu_n - \bu \right) \ dx \nonumber \\
& \displaystyle \quad - \frac{1}{\ r_n^{\frac{p-1}{p}}} \into  F_s(x,u_n,v_n) (\bu_n - \bu)   \, dx \nonumber \\
& \displaystyle \quad + \frac{1}{\ r_n^{\frac{p-1}{p}}} \into h_1(\bu_n - \bu)  \, dx \to 0, \nonumber
\end{align}
and
\begin{equation}\label{palaissmale2}
\frac{1}{\ r_n^{\frac{q-1}{q}}}\ \bigl\langle J'(z_n),(0,\bv_n-\bv)\bigr\rangle \rightarrow 0.
\end{equation}
Since $\bu_n \to \bu$ in $L^p(\Omega)$, $h \in L^{p'}(\Omega)$ and $F_s$ is bounded  we deduce as $n \to + \infty$ that
\begin{equation*}
\displaystyle  \frac{1}{\ r_n^{\frac{p-1}{p}}} \into h_1(\bu_n - \bu)  \, dx \, \to 0 \quad \text{ and } \quad  \frac{1}{\ r_n^{\frac{p-1}{p}}} \into  F_s(x,u_n,v_n) (\bu_n - \bu)   \, dx \, \to 0.
\end{equation*}
Moreover, considering $(\alpha +1)/p + (\beta + 1)/q=1$ and the strong convergences $\bu_n \to \bu$ in $L^{p}(\Omega)$ and $\bv_n \to \bv$ in $L^{q}(\Omega)$, by dominated convergence theorem we get
$$ \displaystyle \lim_{n \to \infty} \into | \bu_n|^{\alpha-1} | \bv_n|^{\beta+1}\bu_n \left( \bu_n - \bu \right) \, dx = 0.$$
Therefore, taking into account these convergences in \eqref{ps1}, we have proved
$$ \displaystyle \lim_{n \to \infty} \into |\nabla \bu_n|^{p-2}\nabla \bu_n \cdot \nabla \left( \bu_n - \bu \right) \, dx  = 0.$$
How done in the proof of Theorem \ref{lambda1isolatoINTRODUZIONE}, by using also \eqref{palaissmale2} we finally obtain that $\{\bu_n,\bv_n\}_n$ converges to $\bz=(\bu,\bv)$ strongly in $X$, where $\bz \neq (0,0)$ satisfies
$$ \displaystyle \lVert \bu \rVert_{1,p}^p + \lVert \bv \rVert_{1,q}^q=1.$$
Now, to prove that $\bz$ is an eigenfunction associated to the eigenvalue $\lambda_1$, it suffices to show that $\bu \neq 0$, $\bv \neq 0$ and 
$$ \displaystyle \frac{\Phi(\bz)}{\Psi(\bz)}=\lambda_1.$$
In the following, we  assume both $\lVert u_n \rVert_{1,p} \to \infty$ and $\lVert v_n \rVert_{1,q} \to \infty$. In the case that one of both sequences is bounded,  computations are simpler and left to reader.
Considering just the boundedness form above coming from \eqref{PS1}, we have 
$$\displaystyle \Phi(u_n,v_n) - \lambda_1 \Psi(u_n,v_n) - \into F(x,u_n,v_n) \, dx + \into h_1 u_n \, dx + \into h_2 v_n \, dx \leq c.$$
Dividing by $r_n$ and taking into account the definition of $\bu_n$ and $\bv_n$, we have
$$\displaystyle \Phi(\bu_n,\bv_n) - \lambda_1 \Psi(\bu_n,\bv_n) -  \into \frac{F(x,u_n,v_n)}{r_n} \, dx + \into \frac{h_1 u_n}{r_n} \, dx +  \into \frac{h_2 v_n}{r_n} \, dx \leq \frac{c}{r_n},$$
whence
\begin{align}\label{limsup}
\displaystyle \limsup_{n \to  \infty} \Biggl[ 
& \Phi(\bu_n,\bv_n) - \lambda_1 \Psi(\bu_n,\bv_n)  \\
&  -  \into \frac{F(x,u_n,v_n)}{r_n} \, dx + \into \frac{h_1 u_n}{r_n} \, dx +  \into \frac{h_2 v_n}{r_n} \, dx \Biggr] \leq 0. \nonumber
\end{align}
Since $h_1 \in L^{p'}(\Omega)$ and $h_2 \in L^{q'}(\Omega)$, by applying H\"older and Poincaré inequality and the convergence to infinity of the sequence $\{r_n\}_n$, we have 
$$ \displaystyle \lim_{n \to \infty} \into  \frac{h_1 u_n}{r_n} \, dx = 0 \qquad \text{and}  \qquad \lim_{n \to \infty} \into \frac{h_2 v_n}{r_n} \, dx =0.$$
For almost every $(x,s,t) \in \Omega \times \mathbb{R}^2$ mean value theorem gives
$$ \displaystyle F(x,s,t)=  F(x,0,0) + \int_0^1 F_s(x,\tau s,\tau t)s \, d \tau + \int_0^1 F_t(x,\tau s,\tau t)t \, d \tau,$$
hence by assumption \textbf{(F1)} we deduce
$$ \displaystyle \left| F(x,s,t)   \right| \leq \left| F(x,0,0) \right| + M |s| + M|t|.$$
Therefore,
$$ \displaystyle \left| \into \frac{F(x,u_n,v_n)}{r_n} \, dx \right| \leq \frac{\lVert F(x,0,0) \rVert_1}{r_n} + K_1 \lVert u_n \rVert_{1,p}^{1-p} + K_2 \lVert v_n \rVert_{1,q}^{1-q},$$
whence
$$ \displaystyle \lim_{n \to \infty} \into \frac{F(x,u_n,v_n)}{r_n} \, dx =  0.$$
Considering these convergences to zero into \eqref{limsup}, we obtain
$$\displaystyle \limsup_{n \to  \infty} \Biggl[ \Phi(\bu_n,\bv_n) - \lambda_1 \Psi(\bu_n,\bv_n) \Biggr] \leq 0.$$
Now, since $\bu_n \, \to \bu$ in $L^p(\Omega)$ and $\bv_n \, \to \bv$ in $L^q(\Omega)$, by Lebesgue theorem we deduce
$$ \displaystyle \Psi(\bu_n,\bv_n) \, \to \Psi(\bu,\bv) \qquad   \text{ as } n \, \to \infty.$$
and by the strong convergences $\bu_n \, \to\bu$ in $\sob$ and $\bv_n \, \to \bv$ in $\sobq$ we also get
$$ \displaystyle \Phi(\bu_n,\bv_n) \, \to \Phi(\bu,\bv) \qquad   \text{ as } n \, \to \infty.$$
Therefore, we have proved $\Phi(\bu,\bv) \leq \lambda_1 \Psi(\bu,\bv)$, i.e. 
$$ \displaystyle  \frac{\alpha+1}{p} \into \left| \nabla \bu \right|^p \, dx + \frac{\beta+1}{q} \into \left| \nabla \bv \right|^q \, dx    \leq \lambda_1   \into \left| \bu \right|^{\alpha+1} \left| \bv \right|^{\beta+1} \, dx. $$
Since $\bz \neq (0,0)$, the left side of previous inequality is strictly greater than zero, by which $\bu \neq 0,$ $\bv \neq 0$ and
$$ \displaystyle \frac{\Phi(\bz)}{\Psi(\bz)}=\lambda_1,$$
that is $\bz=(\bu,\bv)$ is an eigenfunction associated to the eigenvalue $\lambda_1$.\\
Since also $\bz$ satisfies $ \displaystyle \lVert \bu \rVert_{1,p}^p + \lVert \bv \rVert_{1,q}^q=1,$ we conclude by Theorem \ref{simplicityINTRODUZIONE} and \eqref{InsiemeE1riscrittoINTRODUZIONE} that only one of the following possibilities occurs:
$$ \displaystyle (\bu,\bv)= (\varphi_1,\psi_1), \; \; (\bu,\bv)= (-\varphi_1,-\psi_1), \; \; (\bu,\bv)= (-\varphi_1,\psi_1), \; \; (\bu,\bv)= (\varphi_1,-\psi_1).$$
By \eqref{PS2}, we deduce that, up to subsequence, there exists a decreasing sequence $\{ \varepsilon_n \}_n$, converging to zero, such that
$$ \displaystyle \left| \left\langle J'(z_n), \left(\frac{u_n}{p},\frac{v_n}{q}\right) \right\rangle \right| \leq \varepsilon_n \lVert z_n \rVert \qquad \forall n \in \mathbb{N}, \quad \forall z=(u,v) \in X.$$
Hence, taking into account also \eqref{PS1},  we have
\begin{equation*}
\displaystyle  \left(1 +  \varepsilon_n \lVert z_n \rVert \right) C \geq \left| J(z_n) - \left\langle J'(z_n), \left(\frac{u_n}{p},\frac{v_n}{q}\right)\right\rangle + \into F(x,0,0) \, dx \right|,
\end{equation*}
which by the mean value theorem and the definition of $\bu_n$ and $\bv_n$ implies
\begin{align}\label{calcolo}
\displaystyle
\left(1 +  \varepsilon_n \lVert z_n \rVert \right) C \geq
& \left| r_n^{\frac{1}{p}} \Biggl\{ \left( 1 - \frac{1}{p}  \right) \into h_1 \bu_n \, dx  \ -  \into  \int_0^1 F_s(x,\tau u_n,\tau v_n) \bu_n \, d \tau \, dx  \nonumber \right.\\
+&  \frac{1}{p} \into F_s(x,u_n,v_n) \bu_n \,  dx \Biggr\} \\ 
+& r_n^{\frac{1}{q}} \Biggl\{ \left( 1 - \frac{1}{q}  \right) \into h_2 \bv_n \, dx  \ -  \into  \int_0^1 F_t(x,\tau u_n,\tau v_n)\bv_n\, d \tau \, dx  \nonumber\\
+&   \left. \frac{1}{q} \into F_t(x,u_n,v_n)\bv_n \,  dx \Biggr\}\right| .  \nonumber
\end{align}
Let us assume $(\bu,\bv)= (\varphi_1,\psi_1)$.\\
Since $ \bu_n \to \varphi_1$ in $L^p(\Omega)$ and  $ \bv_n \to \psi_1$ in $L^q(\Omega)$, we have
$$\displaystyle \lim_{n \to \infty} \into h_1 \bu_n \, dx  = \into h_1 \varphi_1 \, dx \qquad \text{and} \qquad  \lim_{n \to \infty} \into h_2 \bv_n \, dx  = \into h_1 \psi_1 \, dx.$$
Now, let us observe that $u_n(x)=\bu_n(x)r_n^{\frac{1}{p}}$, where $r_n^{\frac{1}{p}} \to + \infty$ and $\bu_n(x) \to \varphi_1(x)$ a.e. $x \in \Omega$. Since $\varphi_1 > 0$ in $ \Omega$, we deduce $u_n(x) \to + \infty$ a.e. $x \in \Omega$, and similarly we have $v_n(x) \to + \infty$ a.e. $x \in \Omega$. Hence, by assumption $\textbf{(F2)}$ we get
$$ \displaystyle F_s(x,u_n(x),v_n(x)) \to F_s^{++}(x) \qquad \text{a.e. } x \in \Omega,$$
and
$$ \displaystyle F_t(x,u_n(x),v_n(x)) \to F_t^{++}(x) \qquad \text{a.e. } x \in \Omega,$$
so by Lebesgue theorem we deduce
$$ \displaystyle \lim_{n \to \infty} \into F_s(x,u_n,v_n)\bu_n \, dx = \into F_s^{++} \varphi_1 \, dx $$
and
$$ \lim_{n \to \infty}  \into F_t(x,u_n,v_n) \bv_n \, dx = \into F_t^{++} \psi_1 \, dx. $$
In a similar way, by using also Fubini-Tonelli's Theorem we derive
\begin{equation*}
\displaystyle \lim_{n \to \infty} \into \int_0^1 F_s(x, \tau u_n, \tau v_n)\bu_n \, d \tau \, dx = \into F_s^{++} \varphi_1 \, dx
\end{equation*}
and
\begin{equation*}
\displaystyle \lim_{n \to \infty} \into \int_0^1 F_t(x, \tau u_n,\tau v_n)\bv_n \, d \tau \, dx = \into F_t^{++} \psi_1 \, dx.
\end{equation*}
Considering these convergences in \eqref{calcolo}, and denoting with $o(1)$  any quantity that goes to zero as $n$ goes to infinity, we have proved
\begin{align}\label{computation2}
\displaystyle
\left(1 +  \varepsilon_n \lVert z_n \rVert \right) C 
\geq & \left| r_n^{\frac{1}{p}} \left( 1 - \frac{1}{p}  \right) \left[ \into h_1 \varphi_1 \, dx - \into F_s^{++} \varphi_1 \, dx + o(1) \right] \right. \\
& \left. + r_n^{\frac{1}{q}}\left( 1 - \frac{1}{q}  \right) \left[ \into h_2 \psi_1 \, dx  - \into F_t^{++} \psi_1 \, dx + o(1) \right] \right|. \nonumber
\end{align}
If $p<q$, dividing  by $r_n^{\frac{1}{p}} \left(1 - \frac{1}{p} \right)$ and since $\varepsilon_n \to 0$ as $n \to \infty$, passing to limit we get
$$ \displaystyle 0 \geq \left|  \into h_1 \varphi_1 \, dx - \into F_s^{++} \varphi_1 \, dx    \right|,$$
that is a contradiction with both \eqref{LLpminoreq1} and \eqref{LLpminoreq2}.\\
If $p=q$, in a similar way we get 
$$ \displaystyle 0 \geq \left| \into h_1 \varphi_1 \,  + h_2 \psi_1 \, dx  - \into  F_s^{++} \varphi_1 + F_t^{++} \psi_1 \, dx   \right|,$$
that is a contradiction with both \eqref{LLpp1} and \eqref{LLpp2}.\\
If $p > q$, dividing  by $r_n^{\frac{1}{q}} \left(1 - \frac{1}{q} \right)$ and passing to limit we obtain 
$$ \displaystyle 0 \geq \left|  \into h_2 \psi_1 \, dx - \into F_t^{++} \psi_1 \, dx    \right|,$$\\
that is a contradiction with both \eqref{LLpmaggioreq1} and \eqref{LLpmaggioreq2}.\\
Let us recall that we assumed $(\bu,\bv)=(\varphi_1,\psi_1)$.

If it would be  $(\bu,\bv)= (-\varphi_1,-\psi_1)$, we deduce inequality \eqref{computation2} in which $F_s^{++}$ and $F_t^{++}$ are replaced by $F_s^{--}$ and $F_t^{--}$ respectively, and reasoning as before we get a contradiction under the same assumptions.

If now we consider $(\bu,\bv)= (\varphi_1,-\psi_1)$, convergences in \eqref{calcolo} give
\begin{align*}
\displaystyle
\left(1 +  \varepsilon_n \lVert z_n \rVert \right) C 
\geq & \left| r_n^{\frac{1}{p}} \left( 1 - \frac{1}{p}  \right) \left[ \into h_1 \varphi_1 \, dx - \into F_s^{+-} \varphi_1 \, dx + o(1) \right] \right. \\
& \left. + r_n^{\frac{1}{q}}\left( 1 - \frac{1}{q}  \right) \left[  \into F_t^{+-} \psi_1 \, dx -\into h_2 \psi_1 \, dx  + o(1) \right] \right|, \nonumber
\end{align*}
by which we infer a contradiction with \eqref{LLpminoreq1} and \eqref{LLpminoreq2} if $p<q$, with \eqref{LLpp1} and \eqref{LLpp2} if $p=q$, and with \eqref{LLpmaggioreq1} and \eqref{LLpmaggioreq2} if $p>q$.

If it would be  $(\bu,\bv)= (-\varphi_1,\psi_1)$, we deduce previous inequality in which $F_s^{+-}$ and $F_t^{+-}$ are replaced by $F_s^{-+}$ and $F_t^{-+}$ respectively, and we reach a contradiction under the same hypothesis.

We have so proved that the sequence $\{z_n\}_n=\{(u_n,v_n)\}_n$ satisfying \eqref{PS1} and \eqref{PS2} is bounded in $X$. This implies that there exists $z=(u,v) \in X$ such that, up to subsequence, $z_n$ converges to $z$ weakly in $X$ and strongly in $L^p(\Omega)\times L^q(\Omega)$.\\
In particular, as $n \to \infty$ we have
\begin{equation*}
\bigl\langle J'(z_n),(u_n-u,0)\bigr\rangle \rightarrow 0
\end{equation*}
and
\begin{equation*}
\bigl\langle J'(z_n),(0,v_n-v)\bigr\rangle \rightarrow 0.
\end{equation*}
Arguing as previously done to show the strong convergence of $\bu_n$ to $\bu$ in $W_0^{1,p}(\Omega)$ and the strong convergence of $\bv_n$ to $\bv$ in $W_0^{1,q}(\Omega)$, we conclude that up to subsequence $z_n \to z$ strongly in $X$.

\end{proof}

\section{GEOMETRY OF $J$}

\begin{proposition}
Let $F:\Omega \times \mathbb{R}^2 \to \mathbb{R}$ be a $C^1$-Carathéodory function satisfying \textbf{(F1)} and \textbf{(F2)}. Consider $h_1 \in L^{p'}(\Omega)$ and  $h_2 \in L^{q'}(\Omega)$. If we assume that

\begin{itemize}

\item[•]  in the case $p<q$,  condition \eqref{LLpminoreq1}   holds true;

\item[•]  in the case $p=q$,  condition \eqref{LLpp1}         holds true;

\item[•]  in the case $p>q$,  condition \eqref{LLpmaggioreq1} holds true;

\end{itemize}
then $J$ is coercive, i.e., 
$$ \displaystyle \lim_{\lVert(u,v) \rVert \to + \infty} J(u,v)=+ \infty.$$
\end{proposition}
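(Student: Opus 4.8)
The plan is to argue by contradiction. If $J$ were not coercive there would exist $C\in\mathbb{R}$ and a sequence $z_n=(u_n,v_n)\in X$ with $\lVert z_n\rVert\to\infty$ and $J(z_n)\le C$ for all $n$. Set $r_n:=\lVert u_n\rVert_{1,p}^p+\lVert v_n\rVert_{1,q}^q\to\infty$ and, using the $(p,q)$-homogeneity of $\Phi$ and $\Psi$, rescale $\bar u_n:=u_n/r_n^{1/p}$, $\bar v_n:=v_n/r_n^{1/q}$, so that $\lVert\bar u_n\rVert_{1,p}^p+\lVert\bar v_n\rVert_{1,q}^q=1$; passing to a subsequence, $\bar z_n:=(\bar u_n,\bar v_n)\rightharpoonup\bar z=(\bar u,\bar v)$ weakly in $X$ and strongly in $L^p(\Omega)\times L^q(\Omega)$. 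Dividing $J(z_n)\le C$ by $r_n$ and using the bound $|F(x,s,t)|\le|F(x,0,0)|+M|s|+M|t|$ from \textbf{(F1)} together with the Hölder and Poincaré inequalities and $r_n\to\infty$ — exactly as in the proof of the Palais--Smale lemma above — the contributions of $F$, $h_1u_n$ and $h_2v_n$ divided by $r_n$ all vanish, so that $\limsup_n\bigl[\Phi(\bar z_n)-\lambda_1\Psi(\bar z_n)\bigr]\le0$.

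Next I would identify $\bar z$. Since $\bar u_n\to\bar u$ in $L^p(\Omega)$ and $\bar v_n\to\bar v$ in $L^q(\Omega)$, Lebesgue's theorem yields $\Psi(\bar z_n)\to\Psi(\bar z)$, while $\Phi$ is weakly lower semicontinuous; combining this with the elementary inequality $\Phi(w)\ge\lambda_1\Psi(w)$ valid for every $w\in X$ (immediate from $\Psi\ge0$ and the characterization $\lambda_1=\inf_{\tilde{X}}\Phi/\Psi$) gives $\Phi(\bar z)\le\limsup_n\Phi(\bar z_n)\le\lambda_1\Psi(\bar z)\le\Phi(\bar z)$, hence $\Phi(\bar z_n)\to\Phi(\bar z)=\lambda_1\Psi(\bar z)$. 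Since $\Phi(\bar z_n)\ge\min\{(\alpha+1)/p,(\beta+1)/q\}>0$ we get $\Phi(\bar z)>0$, so $\bar z\neq(0,0)$. Finally, from $\Phi(\bar z_n)\to\Phi(\bar z)$ and the separate weak lower semicontinuity of $\lVert\nabla\cdot\rVert_p^p$ and $\lVert\nabla\cdot\rVert_q^q$, each of $\lVert\nabla\bar u_n\rVert_p$ and $\lVert\nabla\bar v_n\rVert_q$ converges to its limit value, and uniform convexity of $W_0^{1,p}(\Omega)$ and $W_0^{1,q}(\Omega)$ upgrades weak to strong convergence $\bar z_n\to\bar z$ in $X$. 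Thus $\lVert\bar u\rVert_{1,p}^p+\lVert\bar v\rVert_{1,q}^q=1$ and $\bar z$ is an eigenfunction associated with $\lambda_1$, so by Theorem~\ref{simplicityINTRODUZIONE} and \eqref{InsiemeE1riscrittoINTRODUZIONE} exactly one of $\bar z=(\varphi_1,\psi_1),\,(-\varphi_1,-\psi_1),\,(-\varphi_1,\psi_1),\,(\varphi_1,-\psi_1)$ holds.

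Now I would return to $J(z_n)$ without dividing by $r_n$, writing $J(z_n)=\bigl[\Phi(z_n)-\lambda_1\Psi(z_n)\bigr]-\into F(x,u_n,v_n)+\into h_1u_n+\into h_2v_n$ and observing that $\Phi(z_n)-\lambda_1\Psi(z_n)=r_n\bigl[\Phi(\bar z_n)-\lambda_1\Psi(\bar z_n)\bigr]\ge0$, so it may be discarded in a lower bound. For the rest, the mean value theorem along the segment joining $(0,0)$ to $(u_n,v_n)$ gives $F(x,u_n,v_n)=F(x,0,0)+u_n\int_0^1F_s(x,\tau u_n,\tau v_n)\,d\tau+v_n\int_0^1F_t(x,\tau u_n,\tau v_n)\,d\tau$, whence
\begin{align*}
-\into F(x,u_n,v_n)+\into h_1u_n+\into h_2v_n
&=-\into F(x,0,0)\\
&\quad+r_n^{1/p}\into\bar u_n\Bigl[h_1-\int_0^1F_s(x,\tau u_n,\tau v_n)\,d\tau\Bigr]\\
&\quad+r_n^{1/q}\into\bar v_n\Bigl[h_2-\int_0^1F_t(x,\tau u_n,\tau v_n)\,d\tau\Bigr].
\end{align*}
In the case $\bar z=(\varphi_1,\psi_1)$ one has $u_n(x)\to+\infty$ and $v_n(x)\to+\infty$ for a.e.\ $x\in\Omega$ (recall $\varphi_1,\psi_1>0$ in $\Omega$), so by \textbf{(F2)}, dominated convergence in $\tau$ and Lebesgue's theorem the two integral factors tend to $\delta_1:=\into h_1\varphi_1-\into F_s^{++}\varphi_1$ and $\delta_2:=\into h_2\psi_1-\into F_t^{++}\psi_1$ respectively. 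If $p<q$ then $r_n^{1/p}/r_n^{1/q}\to\infty$ and $\delta_1>0$ by \eqref{LLpminoreq1}; if $p=q$ then $\delta_1+\delta_2>0$ by \eqref{LLpp1}; if $p>q$ then $r_n^{1/q}/r_n^{1/p}\to\infty$ and $\delta_2>0$ by \eqref{LLpmaggioreq1}. In every case the dominant term forces $-\into F(x,u_n,v_n)+\into h_1u_n+\into h_2v_n\to+\infty$, hence $J(z_n)\to+\infty$, contradicting $J(z_n)\le C$. The three remaining cases $\bar z=(-\varphi_1,-\psi_1),\,(\varphi_1,-\psi_1),\,(-\varphi_1,\psi_1)$ are treated in the same way, replacing $(F_s^{++},F_t^{++})$ by $(F_s^{--},F_t^{--})$, $(F_s^{+-},F_t^{+-})$, $(F_s^{-+},F_t^{-+})$ and invoking the corresponding inequalities of \eqref{LLpminoreq1}, \eqref{LLpp1}, \eqref{LLpmaggioreq1}.

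The delicate point — and where the ``first'' Landesman--Lazer alternative is genuinely used — is this last step: after discarding the nonnegative term $\Phi(z_n)-\lambda_1\Psi(z_n)$, coercivity reduces to controlling the sign of the coefficient of the leading power $r_n^{\max\{1/p,1/q\}}$ (of $r_n^{1/p}=r_n^{1/q}$ when $p=q$), and that coefficient is made strictly positive exactly by \eqref{LLpminoreq1}/\eqref{LLpp1}/\eqref{LLpmaggioreq1}. Identifying $\bar z$ as one of the four normalized eigenfunctions — which here requires establishing the strong convergence $\bar z_n\to\bar z$ \emph{without} any Palais--Smale information, using only $\limsup_n[\Phi(\bar z_n)-\lambda_1\Psi(\bar z_n)]\le0$ and uniform convexity — is what makes $\delta_1,\delta_2$ explicitly computable in terms of the limits $F_s^{\pm\pm},F_t^{\pm\pm}$.
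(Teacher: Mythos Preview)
Your proof is correct and follows essentially the same route as the paper: contradiction, $(p,q)$-rescaling by $r_n$, identification of the weak limit $\bar z$ as a first eigenfunction via $\Phi(\bar z)\le\limsup\Phi(\bar z_n)\le\lambda_1\Psi(\bar z)\le\Phi(\bar z)$, then discarding $\Phi(z_n)-\lambda_1\Psi(z_n)\ge0$, expanding $F$ by the mean value theorem, and contradicting the relevant Landesman--Lazer inequality after isolating the dominant power $r_n^{\max\{1/p,1/q\}}$. The only cosmetic difference is that you push the argument one step further to obtain strong convergence $\bar z_n\to\bar z$ in $X$ (hence $\lVert\bar u\rVert_{1,p}^p+\lVert\bar v\rVert_{1,q}^q=1$ and $\bar z$ equals one of the four \emph{normalized} eigenfunctions), whereas the paper is content with the strong $L^p\times L^q$ convergence from compact embedding and writes $\bar z=(\pm\theta^{1/p}\varphi_1,\pm\theta^{1/q}\psi_1)$ for some $\theta>0$; the extra factor $\theta$ is harmless in the final contradiction.
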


\begin{proof} Arguing by contradicition, suppose that there exists a constant $c$ and a sequence $\{(u_n,v_n)\}_n$ in $X$ such that
\begin{equation}\label{coercconto1}
\displaystyle J(u_n,v_n) \leq c \qquad \text{for any } n \in \mathbb{N},
\end{equation} 
and
$$ \displaystyle \lVert (u_n,v_n) \rVert \to + \infty \qquad \text{as } n \to + \infty.$$
Let us set
$$ \displaystyle r_n:=\lVert u_n \rVert_{1,p}^p + \lVert v_n \rVert_{1,q}^q, \qquad \bu_n:= \frac{u_n}{r_n^{\frac{1}{p}}} \qquad \text{and} \qquad  \bv_n:=\frac{v_n}{r_n^{\frac{1}{q}}},$$
and observe that $\bz_n:=(\bu_n,\bv_n)$ satisfies 
$$ \displaystyle \lVert \bu_n \rVert_{1,p}^p + \lVert \bv_n \rVert_{1,q}^q = 1 \qquad \text{ for any } n \in \mathbb{N}.$$
Hence $ \lVert \bz_n \rVert \leq 2$ for any $n \in \mathbb{N}$ and there exists a subsequence, still denoted by $\lbrace{\bz_n\rbrace}_n,$ that converges to some $\bz=(\bu,\bv)$ weakly in $X$ and strongly in $L^p(\Omega)\times L^q(\Omega)$.\\
Dividing \eqref{coercconto1} by $r_n$, arguing as in the proof of (PS) condition and considering also the characterization of $\lambda_1$ and the weak lower semicontinuity of both $\lVert \cdot \lVert_{1,p}^p$ and $\lVert \cdot \lVert_{1,q}^q$, we get
\begin{align*}
\displaystyle 
\Phi(\bu,\bv)  \leq \liminf_{n \to \infty} \Phi(\bu_n,\bv_n) \leq \limsup_{n \to \infty} \Phi(\bu_n,\bv_n)   \leq \lambda_1 \Psi(\bu,\bv) \leq \Phi(\bu,\bv).
\end{align*}
In particular, since $\lVert \bu_n \rVert_{1,p}^p + \lVert \bv_n \rVert_{1,q}^q =1 $ for any $n \in \mathbb{N},$ we deduce that $(\bu,\bv)$ is a nontrivial eigenfunction associated to $\lambda_1$, hence there exists $\theta>0$ such that only one of the following possibilities occurs:
$$ \displaystyle (\bu,\bv)= (\theta^{\frac{1}{p}}\varphi_1,\theta^{\frac{1}{q}} \psi_1), \qquad (\bu,\bv)= (-\theta^{\frac{1}{p}}\varphi_1,-\theta^{\frac{1}{q}}\psi_1),$$
$$ (\bu,\bv)= (\theta^{\frac{1}{p}}\varphi_1,-\theta^{\frac{1}{q}}\psi_1), \qquad (\bu,\bv)= (-\theta^{\frac{1}{p}}\varphi_1,\theta^{\frac{1}{q}}\psi_1).$$
Let us assume $(\bu,\bv)= (\theta^{\frac{1}{p}}\varphi_1,\theta^{\frac{1}{q}} \psi_1)$.\\
By the mean value theorem, the characterization of $\lambda_1$ and the definition of the normalized sequence $\bz_n=(\bu_n,\bv_n)$, we infer that
\begin{align*}
\displaystyle
c 
& \geq J(u_n,v_n) \\
& \geq - \into F(x,u_n,v_n) \, dx + \into h_1 u_n \, dx + \into h_2 v_n \, dx \nonumber \\
& = r_n^{\frac{1}{p}} \left[  \into h_1 \bu_n \, dx  \ -  \into  \int_0^1 F_s(x,\tau u_n,\tau v_n)\bu_n \, d \tau \, dx \right] \nonumber \\
&+ r_n^{\frac{1}{q}} \left[ \into h_2 \bv_n \, dx  \ -  \into  \int_0^1 F_t(x,\tau u_n,\tau v_n)\bv_n \, d \tau \, dx   \right] - \into F(x,0,0) \, dx. \nonumber
\end{align*}
Since $(\bu_n,\bv_n) \to (\theta^{\frac{1}{p}} \varphi_1, \theta^{\frac{1}{q}} \psi_1)$  strongly in $L^p(\Omega)\times L^q(\Omega)$, we have
\begin{align*}
& \displaystyle  \lim_{n \to \infty}\into h_1 \bu_n \, dx = \theta^{\frac{1}{p}} \into h_1 \varphi_1 \, dx,\\ &\displaystyle \lim_{n \to \infty} \int_0^1 F_s(x, \tau u_n, \tau v_n)\bu_n \, d \tau \, dx = \theta^{\frac{1}{p}} \into F_s^{++} \varphi_1 \, dx,
\end{align*}
and
\begin{align*}
& \displaystyle \lim_{n \to \infty} \into h_2 \bv_n \, dx  = \theta^{\frac{1}{q}} \into h_1 \psi_1 \, dx,\\
& \displaystyle \lim_{n \to \infty} \into \int_0^1 F_t(x, \tau u_n,\tau v_n)\bv_n \, d \tau \, dx = \theta^{\frac{1}{q}} \into F_t^{++} \psi_1 \, dx.
\end{align*}
Therefore, considering these limits in the previous inequality we deduce that
\begin{align*}
\displaystyle
c 
& \geq \theta^{\frac{1}{p}}r_n^{\frac{1}{p}} \left[  \into h_1 \varphi_1 \, dx  \ -  \into  F_s^{++} \varphi_1 \, dx + o(1) \right] \\
&+ \theta^{\frac{1}{q}}r_n^{\frac{1}{q}} \left[ \into h_2 \psi_1 \, dx  \ -  \into F_t^{++} \psi_1 \, dx  + o(1) \right]  - \into F(x,0,0) \, dx.
\end{align*}
In the case $p<q$, dividing by  $\theta^{\frac{1}{p}}r_n^{\frac{1}{p}}$ and passing to limit as $n \to \infty$, we have
$$ \displaystyle 0 \geq \into h_1 \varphi_1 \, dx  \ -  \into  F_s^{++} \varphi_1 \, dx,$$
which contradicts \eqref{LLpminoreq1}.\\
When $p=q$, similarly we obtain
$$ \displaystyle 0 \geq \into h_1 \varphi_1 \, dx + h_2 \psi_1 \, dx  -  \into  F_s^{++} \varphi_1 + F_t^{++} \psi_1  \, dx,$$ 
contradicting \eqref{LLpp1}.\\
If $p>q$, dividing by  $\theta^{\frac{1}{q}}r_n^{\frac{1}{q}}$ and passing to limit as $n \to  \infty$ we get
$$ \displaystyle 0 \geq \into h_2 \psi_1\, dx  \ -  \into  F_t^{++} \psi_1 \, dx,$$ 
contradicting \eqref{LLpmaggioreq1}.\\
These contradiction show that $(\bu,\bv)$ cannot be equal to $(\theta^{\frac{1}{p}}\varphi_1,\theta^{\frac{1}{q}}\psi_1).$

In a similar way if it would be $(\bu,\bv)=(-\theta^{\frac{1}{p}}\varphi_1,-\theta^{\frac{1}{q}}\psi_1)$, one obtains a contradiction with the hypotheses \eqref{LLpminoreq1}, \eqref{LLpp1} and \eqref{LLpmaggioreq1}, involving $F_s^{--}$ and $F_t^{--}$.

If now we assume $(\bu,\bv)=(\theta^{\frac{1}{p}}\varphi_1,-\theta^{\frac{1}{q}}\psi_1),$ we obtain
\begin{align*}
\displaystyle
c 
& \geq \theta^{\frac{1}{p}}r_n^{\frac{1}{p}} \left[  \into h_1 \varphi_1 \, dx  \ -  \into  F_s^{+-} \varphi_1 \, dx + o(1) \right] \\
&+ \theta^{\frac{1}{q}}r_n^{\frac{1}{q}} \left[  \into F_t^{+-} \psi_1 \, dx - \into h_2 \psi_1 \, dx  + o(1) \right]  - \into F(x,0,0) \, dx,
\end{align*}
and arguing as before we get a contradiction with  \eqref{LLpminoreq1}, \eqref{LLpp1} and \eqref{LLpmaggioreq1}.

Similarly, if it would be $(\bu,\bv)=(-\theta^{\frac{1}{p}}\varphi_1,\theta^{\frac{1}{q}}\psi_1)$, one obtains a contradiction with the hypotheses \eqref{LLpminoreq1}, \eqref{LLpp1} and \eqref{LLpmaggioreq1}, involving $F_s^{-+}$ and $F_t^{-+}$.
\end{proof}
\noindent
{\mbox {\it Proof of Theorem~\ref{mainresult}.~}} Let us consider assumption  \eqref{LLpminoreq1} if $p<q$, \eqref{LLpp1} if $p=q$ and \eqref{LLpmaggioreq1} if $p>q$. In these cases we have proved that the functional $J$ is coercive. This and the sequentially weakly lower semicontinuity of $J$ on the reflexive Banach space $X$ imply by Weierstrass Theorem  that $J$ has a global minimum point, i.e. there exists a weak solution for problem \eqref{Syst0}.\\
Let us now consider assumption  \eqref{LLpminoreq2} if $p<q$, \eqref{LLpp2} if $p=q$ and \eqref{LLpmaggioreq2} if $p>q$.\\
We will show that
\begin{align}\label{limiti}
\displaystyle
& \lim_{\theta \to + \infty} J(\theta^{\frac{1}{p}} \varphi_1,\theta^{\frac{1}{q}} \psi_1)=-\infty, & \lim_{\theta \to + \infty} J(-\theta^{\frac{1}{p}} \varphi_1,-\theta^{\frac{1}{q}} \psi_1)=-\infty, \nonumber \\
&\\
& \lim_{\theta \to + \infty} J(\theta^{\frac{1}{p}} \varphi_1,-\theta^{\frac{1}{q}} \psi_1)=-\infty, & \lim_{\theta \to + \infty} J(-\theta^{\frac{1}{p}} \varphi_1,\theta^{\frac{1}{q}} \psi_1)=-\infty.  \nonumber 
\end{align}
By characterization of $\lambda_1$ and the mean value formula, for any $\theta >0$ we infer that
\begin{align*}
\displaystyle
J(\theta^{\frac{1}{p}} \varphi_1,\theta^{\frac{1}{q}} \psi_1)
& = \theta^{\frac{1}{p}} \into h_1 \varphi_1 \, dx + \theta^{\frac{1}{q}} \into h_2 \psi_1 \, dx - \into F(x,\theta^{\frac{1}{p}} \varphi_1,\theta^{\frac{1}{q}} \psi_1) \, dx \\
& = \theta^{\frac{1}{p}} \left[ \into h_1 \varphi_1 \, dx  -  \into \int_0^1 F_s(x,\tau \theta^{\frac{1}{p}} \varphi_1,\tau \theta^{\frac{1}{q}} \psi_1) \varphi_1 \, d \tau \, dx \right] \\
& + \theta^{\frac{1}{q}} \left[ \into h_2 \psi_1 \, dx  -  \into \int_0^1 F_t(x,\tau \theta^{\frac{1}{p}} \varphi_1,\tau \theta^{\frac{1}{q}} \psi_1) \psi_1 \, d \tau \, dx \right] \\
& - \into F(x,0,0) \, dx.
\end{align*}
By \textbf{(F2)}, Fubini-Tonelli and Lebesgue theorems, it follows
$$ \displaystyle \lim_{\theta \to + \infty} \into \int_0^1 F_s(x,\tau \theta^{\frac{1}{p}} \varphi_1,\tau \theta^{\frac{1}{q}} \psi_1) \varphi_1 \, d \tau \, dx = \into F_s^{++} \varphi_1  \, dx;$$
$$ \displaystyle \lim_{\theta \to + \infty} \into \int_0^1 F_t(x,\tau \theta^{\frac{1}{p}} \psi_1,\tau \theta^{\frac{1}{q}} \psi_1) \psi_1 \, d \tau \, dx = \into F_t^{++} \psi_1  \, dx.$$
Therefore,
\begin{align*}
\displaystyle
J(\theta^{\frac{1}{p}} \varphi_1,\theta^{\frac{1}{q}} \psi_1)
& = \theta^{\frac{1}{p}} \left[ \into h_1 \varphi_1 \, dx  - \into F_s^{++} \varphi_1  \, dx  + o(1)\right] \\
& + \theta^{\frac{1}{q}} \left[ \into h_2 \psi_1 \, dx  -  \into F_t^{++} \psi_1  \, dx + o(1) \right] \\
& - \into F(x,0,0) \, dx,
\end{align*}
by which
\begin{equation*}
\displaystyle 
\lim_{\theta \to + \infty} J(\theta^{\frac{1}{p}} \varphi_1,\theta^{\frac{1}{q}} \psi_1)=- \infty.
\end{equation*}
Arguing in a similar way, considering the other inequalities in assumptions  \eqref{LLpminoreq2}, \eqref{LLpp2} and \eqref{LLpmaggioreq2} involving $F_s^{--}$ and $F_t^{--}$,  $F_s^{+-}$ and $F_t^{+-}$, and $F_s^{-+}$ and $F_t^{-+}$ respectively, we get the other limits in \eqref{limiti}.

Let $\lambda_2$ be the eigenvalue of \eqref{EigenDeThélin0} defined in \eqref{lambdakSfera} and let us  set
$$\Lambda_2:=\{ (u,v) \in X \; : \; \Phi(u,v) \geq \lambda_2 \Psi(u,v)     \}.$$
By $\textbf{(F1)}$ and the mean value formula, we have
$$ \displaystyle \left| F(x,s,t) \right| \leq M|s| + M|t| + \left|F(x,0,0) \right|.$$
Denoting with $\lambda_{1,p}$ and $\lambda_{1,q}$ the first eigenvalue on $\Omega$ under homogeneous Dirichlet boundary condition of $\Delta_p$ and $\Delta_q$ respectively, by H\"older and Poincaré inequality we infer that
\begin{align*}
\displaystyle
& \left|  - \into F(x,u,v) \, dx + \into h_1 u \, dx + \into h_2 v \, dx \right| \\
\leq & \into  \left| F(x,0,0) \right|  \, dx + M \into \left| u  \right| \, dx +  M \into \left| v  \right| \, dx +  \into \left| h_1 \right| \left| u  \right| \, dx + \into \left| h_2 \right| \left| v \right| \, dx \nonumber \\
\leq & \into \left|F(x,0,0)\right| \, dx + \left( M \left| \Omega \right|^{\frac{1}{p'}} +  \lVert h_1 \rVert_{p'}  \right) \lVert u \rVert_p +  \left( M \left| \Omega \right|^{\frac{1}{q'}} + \lVert h_2 \rVert_{q'} \right) \lVert v \rVert_q \nonumber \\
\leq &  \into \left|F(x,0,0)\right| \, dx + \frac{\left( M \left| \Omega \right|^{\frac{1}{p'}} +  \lVert h_1 \rVert_{p'}  \right)}{ \lambda^{\frac{1}{p}}_{1,p}} \lVert u \rVert_{1,p} +  \frac{\left( M \left| \Omega \right|^{\frac{1}{q'}} + \lVert h_2 \rVert_{q'} \right)}{\lambda^{\frac{1}{q}}_{1,q}} \lVert v \rVert_{1,q},\nonumber
\end{align*}
for any $(u,v) \in X$.\\
On the other hand, for any $(u,v) \in \Lambda_2$ we get
\begin{align*}
\displaystyle \Phi(u,v) - \lambda_1 \Psi(u,v) 
& \geq \left( 1 - \frac{\lambda_1}{\lambda_2}\right)  \Phi(u,v)  \\
& = \frac{\alpha+1}{p}\left( 1 - \frac{\lambda_1}{\lambda_2}\right)\lVert u \rVert^p_{1,p}  + \frac{\beta+1}{q}\left( 1 - \frac{\lambda_1}{\lambda_2}\right)\lVert v \rVert^q_{1,q}. \nonumber
\end{align*}
Altogether, for any $(u,v) \in \Lambda_2$ we have
\begin{align*}
\displaystyle
J(u,v)
& \geq \frac{\alpha+1}{p}\left( 1 - \frac{\lambda_1}{\lambda_2}\right)\lVert u \rVert^p_{1,p}  - \frac{\left( M \left| \Omega \right|^{\frac{1}{p'}} +  \lVert h_1 \rVert_{p'}  \right)}{ \lambda^{\frac{1}{p}}_{1,p}} \lVert u \rVert_{1,p} \\
& + \frac{\beta+1}{q}\left( 1 - \frac{\lambda_1}{\lambda_2}\right)\lVert v \rVert^q_{1,q} - \frac{\left( M \left| \Omega \right|^{\frac{1}{q'}} + \lVert h_2 \rVert_{q'} \right)}{\lambda^{\frac{1}{q}}_{1,q}} \lVert v \rVert_{1,q}\\
& - \into \left|F(x,0,0)\right| \, dx.
\end{align*}
Since $\lambda_1$ is isolated, we have $\lambda_2 > \lambda_1$ and so we deduce that $J$ is coercive on $\Lambda_2$, and in particular it is bounded from below.
Considering also \eqref{limiti}, we have shown that there exists $\Theta >0$ such that, setting
$$ \displaystyle E_1^{\Theta}= \bigg\{ ( \left|  \theta \right|^{\frac{1}{p}} \varphi_1 ,  \left|  \theta \right|^{\frac{1}{q}} \psi_1 )  \text{sgn} (\theta)  : \left|\theta \right| \geq \Theta \bigg\}   \bigcup  \bigg\{ (- \left|  \theta \right|^{\frac{1}{p}} \varphi_1 ,  \left|  \theta \right|^{\frac{1}{q}} \psi_1 )  \text{sgn} (\theta)  :  \left|\theta \right| \geq \Theta \bigg\},$$ 
we have
$$ \displaystyle \gamma:= \sup_{(u,v) \in E_1^{\Theta}} J(u,v) < \inf_{(u,v) \in \Lambda_2} J(u,v):= \delta.$$
Consider the family of mappings 
$$ 
\Gamma:= \{ h \in C( [-1,1] ,X) \, : \, h(1)=-h(-1)\in E_1^{\Theta}  \}.
$$
Notice that $\displaystyle \tilde{h}(\tau):= ( \tau \Theta^{\frac{1}{p}} \varphi_1 , \tau \Theta^{\frac{1}{q}} \psi_1 )$,  $\tau \in [-1,1]$, belongs to $\Gamma$ and thus $\Gamma\not=\emptyset$.\\
Let us show that $h(B_1) \cap \Lambda_2 \neq \emptyset$ for every $h \in \Gamma$, where $B_1:=[-1,1]$.\\ Indeed, this is immediate if $(0,0) \in h(B_1)$. When $(0,0) \not \in h(B_1)$, we consider the odd map $\pi: X \setminus (0,0) \to \mathcal{M}$ given by 
$$ \displaystyle \pi(u,v):= \left( \frac{u}{\Phi(u,v)^{\frac{1}{p}}}   ,  \frac{v}{\Phi(u,v)^{\frac{1}{q}}}     \right), \quad (u,v)\in X\setminus (0,0),$$
and the map $\alpha_0:S^1 \to \mathcal{M}$ defined for every $(\xi_1,\xi_2)\in S^1$ by
$$ 
\displaystyle
\alpha_0(\xi_1,\xi_2):=
\begin{cases}
\; \; \, \pi ( \, h(\xi_1) \, ) & \text{ if } \xi_2 \geq 0, \bigskip \\
- \pi  ( \, h (-\xi_1) \, ) & \text{ if } \xi_2 < 0.
\end{cases}
$$
Observe that by the definition of $\alpha_0$, we have $\alpha_0(-\xi_1,-\xi_2) = -\alpha_0(\xi_1,\xi_2)$ for every $(\xi_1,\xi_2)\in S^1$ with $\xi_2\not=0$. 
On the other hand, 
since $h(1)=-h(-1)$, the oddness of $\pi$ implies that  $\alpha_0(-1,0) = -\alpha_0(1,0)$ and, consequently,  $\alpha_0$ is odd in all $S^1$. In addition, using again that $h(1)=-h(-1)$, the map $\alpha_0$ is also continuous. In particular, $\alpha_0(S^{1})\in \mathcal{M}_2$, where  $\mathcal{M}_2$ is defined in Proposition \ref{prop 2.9}.\\
Recalling that
$$ \displaystyle \lambda_2:= \frac{1}{c_2} \qquad \text{where} \quad c_2:= \sup_{A \in \mathcal{M}_2} \min_{(u,v) \in A} Q_{ \mathcal{M}}(u,v)=\sup_{A \in \mathcal{M}_2} \min_{(u,v) \in A} \frac{\Psi(u,v)}{\Phi(u,v)},$$
we have
$$
\min_{(u,v) \in\alpha_0(S^1)} \frac{\Psi(u,v)}{\Phi(u,v)} \leq c_2.
$$
Let $(u_0,v_0) \in \alpha_0 (S^1)$ be such that 
 $ \displaystyle \frac{\Psi(u_0,v_0) }{\Phi (u_0,v_0)} \leq c_2$, i.e.,
 $ \displaystyle \Phi(u_0,v_0)\geq \lambda_2 \Psi(u_0,v_0)$ and 
 so $(u_0,v_0) \in \alpha_0(S^1) \cap \Lambda_2$. 
By definition of $\alpha_0$, this implies that
 there exists $\tau \in [-1,1]$ such that $\pi(h(\tau)) \in \Lambda_2$. 
 Now we observe that the $(p,q)$-homogeneity of the functionals 
 $\Phi$ and $\Psi$ implies that  $h(\tau) \in \Lambda_2$ 
and thus $h(B_1) \cap \Lambda_2 \neq \emptyset$.\\
In particular, 
$$ 
\displaystyle  c:= \inf_{h \in \Gamma}  \max_{\tau \in B_1} J (h(\tau)) \geq \delta > \gamma
$$
Now, we  show that 
$c$
is a critical value of $J$.\\
Indeed, let us assume by contradiction that $c$ is a regular value of $J$. 
Fixing $0 < \bar{\varepsilon} < c - \gamma,$ by \cite[Theorem 3.4]{STRUWE} there 
exists $ \varepsilon \in (0, \bar{\varepsilon})$ and a family of homeomorphisms $ \phi: X \times [0,1] \to X$ satisfying the following properties:
\begin{enumerate}
\item[$(i)$] $ \phi ( z,\tau)=z$ if $\tau=0$ or if $ \left|  J(z) - c \right| \geq \bar{\varepsilon} $;
\item[$(ii)$]$ J(\phi ( z,\tau))$ is non-increasing in $\tau$ for any $ z \in X$;
\item[$(iii)$] If $J(z) \leq c + \varepsilon$, then $J( \phi(z,1)) \leq c - \varepsilon$.
\end{enumerate} 
For any $z \in E_1^{\Theta}$, we have $J(z) \leq \gamma < c - \bar{\varepsilon}$ 
and so by property $(i)$ we infer that $\phi$ leaves the set $E_1^{\Theta}$ fixed. 
By definition of $c$ there exists $h \in \Gamma$ such that 
$ \max_{\tau \in [-1,1]} J (h(\tau)) < c + \varepsilon$. Let us define 
$\hat{h}(\cdot):= \phi( h(\cdot) , 1).$ 
Since $h(\pm 1)\in E_1^{\Theta}$, 
we get  $\hat{h}(\pm 1)= \phi( h(\pm 1) , 1) = h(\pm 1)$, from which we deduce 
that $\hat{h}(-1)= -\hat{h}(1)$ and hence
 $\hat{h} \in \Gamma$. Now,  
property $(iii)$ gives
$ \max_{\tau \in [-1,1]} J (\hat{h}(\tau)) \leq c - \varepsilon$, 
which  contradicts the definition of $c$.

\qed

\noindent
{\bf Acknowledgments.}\\ 
The first author is supported by Junta de Andalucía (grant FQM-116).
The second and third authors are supported by INdAM-GNAMPA, and thank PNRR MUR project CN00000013 HUB - National Centre for HPC, Big Data and Quantum Computing (CUP H93C22000450007).


\begin{thebibliography}{999}


\bibitem{ALP}
S.~Ahmad, A.~C.~Lazer, J.~L.~Paul,
Elementary critical point theory and perturbations of elliptic boundary value problems at resonance,
Indiana Univ. Math. J. \textbf{25} (1976), no.~10, 933--944.

\bibitem{AH}
W.~Allegretto, H.~Y.~Xi,
A Picone's identity for the $p$-Laplacian and applications,
Nonlinear Anal. \textbf{32} (1998), no.~7, 819--830.

\bibitem{AMBROSETTIMANCINI}
A.~Ambrosetti, G.~Mancini,
Existence and multiplicity results for nonlinear elliptic problems with linear part at resonance: the case of the simple eigenvalue,
J. Differential Equations \textbf{28} (1978), no.~2, 220--245.

\bibitem{ANGOS}
A.~Anane, J.-P.~Gossez,
Strongly nonlinear elliptic problems near resonance: a variational approach,
Comm. Partial Differential Equations \textbf{15} (1990), no.~8, 1141--1159.


\bibitem{AL}
A.~Anane, 
Simplicit\'e et isolation de la premi\`ere valeur propre du $p$-Laplacien avec poids,
C.~R.~Acad.~Sci.~Paris S\'er.~I Math. \textbf{305} (1987), no.~16, 725--728.


\bibitem{AG}
D.~Arcoya, J.~L.~G\'amez,
Bifurcation theory and related problems: anti-maximum principle and resonance,
Comm. Partial Differential Equations \textbf{26} (2001), no.~9--10, 1879--1911.

\bibitem{AO}
D.~Arcoya, L.~Orsina,
Landesman--Lazer conditions and quasilinear elliptic equations,
Nonlinear Anal. \textbf{28} (1997), no.~10, 1623--1632.

\bibitem{boccardodefiguerido}
L.~Boccardo, D.~G.~de Figueiredo,
Some remarks on a system of quasilinear elliptic equations,
NoDEA Nonlinear Differential Equations Appl. \textbf{9} (2002), no.~3, 309--323.

\bibitem{BDK}
L.~Boccardo, P.~Dr\'abek, M.~Ku{\v{c}}era,
Landesman--Lazer conditions for strongly nonlinear boundary value problems,
Comment. Math. Univ. Carolin. \textbf{30} (1989), no.~3, 411--427.

\bibitem{BON}
A.~Bonnet,
A deformation lemma on a $C^1$ manifold,
Manuscripta Math. \textbf{81} (1993), no.~1, 339--359.


\bibitem{DT}
F.~de Th\'elin,
First eigenvalue of a nonlinear elliptic system,
C.~R.~Acad.~Sci.~Paris S\'er.~I Math. \textbf{311} (1990), no.~10, 603--606.

\bibitem{DEM}
K.~Deimling,
Nonlinear Functional Analysis,
Springer--Verlag, 1985.

\bibitem{DIAZSAA}
J.~I.~Díaz,  J.~E.~Saá,  Existence et unicité de solutions positives pour certaines équations elliptiques quasilinéaires.
C.~R.~Acad.~Sci.~Paris S\'er.~I Math. \textbf{305} (1987), no.~12, 521--524.  

\bibitem{DR}
P.~Dr\'abek, S.~B.~Robinson,
Resonance problems for the $p$-Laplacian,
J. Funct. Anal. \textbf{169} (1999), no.~1, 189--200.

\bibitem{HESS}
P.~Hess,
On a theorem by Landesman and Lazer,
Indiana Univ. Math. J. \textbf{23} (1974), no.~9, 827--829.

\bibitem{LANDLAZ}
E.~M.~Landesman, A.~C.~Lazer,
Nonlinear perturbations of linear elliptic boundary value problems at resonance,
J. Math. Mech. \textbf{19} (1970), no.~7, 609--623.


\bibitem{PAO}
K.~Perera, R.~P.~Agarwal, D.~O'Regan,
Morse Theoretic Aspects of $p$-Laplacian Type Operators,
Amer. Math. Soc., 2010.

\bibitem{RAB}
P.~H.~Rabinowitz,
Some minimax theorems and applications to nonlinear partial differential equations,
Nonlinear Anal. \textbf{2} (1978), 161--177.

\bibitem{SZ}
N.~M.~Stavrakakis, N.~B.~Zographopoulos,
Bifurcation results for quasilinear elliptic systems,
Adv. Differential Equations \textbf{8} (2003), no.~3, 315--336.

\bibitem{STRUWE}
M.~Struwe,
Variational Methods,
Springer--Verlag, 2000.


\bibitem{tolksdorf1983}
P.~Tolksdorf,
On the Dirichlet problem for quasilinear equations in domains with conical boundary points,
Comm. Partial Differential Equations \textbf{8} (1983), no.~7, 773--817.

\bibitem{VAZ}
J.~L.~V\'azquez,
A strong maximum principle for some quasilinear elliptic equations,
Appl. Math. Optim. \textbf{12} (1984), 191--202.


\end{thebibliography}
\end{document}